\DeclareFontFamily{U}{mathx}{\hyphenchar\font45}
\DeclareFontShape{U}{mathx}{m}{n}{<-> mathx10}{}
\DeclareSymbolFont{mathx}{U}{mathx}{m}{n}
\DeclareMathAccent{\widebar}{0}{mathx}{"73}
\author{Paul Pollack}
\email{pollack@uga.edu}
\author{Joseph Vandehey}
\email{vandehey@uga.edu}
\address{Department of Mathematics\\Boyd Graduate Studies Research Center\\University of Georgia\\Athens, GA 30602\\USA}
\title{Some normal numbers generated by arithmetic functions}
\DeclareMathAlphabet{\curly}{U}{rsfs}{m}{n}
\renewcommand\phi\varphi
\newtheorem{thm}{Theorem}
\newtheorem*{thmA}{Theorem A}
\newtheorem{prop}[thm]{Proposition}
\newtheorem{lem}[thm]{Lemma}
\theoremstyle{definition}
\newtheorem*{df}{Definition}
\newtheorem*{rmk}{Remark}
\theoremstyle{remark}
\begin{document}
\renewcommand{\labelenumi}{(\roman{enumi})}
\def\a{\mathbf{a}}
\def\G{\curly{G}}
\def\d{\mathrm{d}}
\def\i{\mathrm{i}}
\def\e{\mathrm{e}}
\def\C{\mathbb{C}}
\def\D{\curly{D}}
\def\A{\curly{A}}
\def\Pp{\curly{P}}
\def\Prob{\mathbf{P}}
\def\E{\curly{E}}
\def\R{\mathbb{R}}
\def\M{\mathfrak{M}}
\def\N{\mathbb{N}}
\def\Q{\mathbb{Q}}
\def\Z{\mathbb{Z}}
\def\Ss{\curly{S}}
\def\1{\mathbf{1}}
\def\tD{\tilde{D}}
\def\w{\mathbf{w}}
\def\lcm{\mathop{\operatorname{lcm}}}
\newcommand{\leg}[2]{\genfrac{(}{)}{}{}{#1}{#2}}
\makeatletter
\def\moverlay{\mathpalette\mov@rlay}
\def\mov@rlay#1#2{\leavevmode\vtop{%
   \baselineskip\z@skip \lineskiplimit-\maxdimen
   \ialign{\hfil$\m@th#1##$\hfil\cr#2\crcr}}}
\newcommand{\charfusion}[3][\mathord]{
    #1{\ifx#1\mathop\vphantom{#2}\fi
        \mathpalette\mov@rlay{#2\cr#3}
      }
    \ifx#1\mathop\expandafter\displaylimits\fi}
\makeatother

\makeatletter
\let\@@pmod\pmod
\DeclareRobustCommand{\pmod}{\@ifstar\@pmods\@@pmod}
\def\@pmods#1{\mkern4mu({\operator@font mod}\mkern 6mu#1)}
\makeatother

\newcommand{\cupdot}{\charfusion[\mathbin]{\cup}{\cdot}}
\newcommand{\bigcupdot}{\charfusion[\mathop]{\bigcup}{\cdot}}
\subjclass[2000]{Primary: 11K16, Secondary: 11N25, 11N37}
\begin{abstract} Let $g \geq 2$. A real number is said to be \emph{$g$-normal} if its base $g$ expansion contains every finite sequence of digits with the expected limiting frequency. Let $\phi$ denote Euler's totient function, let $\sigma$ be the sum-of-divisors function, and let $\lambda$ be Carmichael's lambda-function. We show that if $f$ is any function formed by composing $\phi$, $\sigma$, or $\lambda$, then the number
\[ 0. f(1) f(2) f(3) \dots \]
obtained by concatenating the base $g$ digits of successive $f$-values is $g$-normal. We also prove the same result if the inputs $1, 2, 3, \dots$ are replaced with the primes $2, 3, 5, \dots$. The proof is an adaptation of a method introduced by Copeland and Erd\H{o}s in 1946 to prove the $10$-normality of $0.235711131719\ldots$.
\end{abstract}
\maketitle

\section{Introduction}
Let $g\geq 2$. We say that a real number $\alpha$ is \emph{$g$-normal} if every preassigned sequence of digits, of length $k \geq 1$, occurs with the expected limiting frequency $g^{-k}$ in the base $g$ expansion of $\alpha$. This concept was introduced by Borel \cite{borel09} in 1909, who showed that for every $g$, almost all real numbers are $g$-normal. (Here ``almost all'' is meant in the sense of Lebesgue measure.) Regrettably, none of the more familiar mathematical constants --- such as $e$, $\pi$, or $\sqrt{2}$ --- are known to be normal to any base $g\geq 2$.

The first explicit construction of a normal number was given by Champernowne \cite{champernowne33} in 1933, while still an undergraduate. The simplest and most famous example from that paper is the (base $10$) \emph{Champernowne number}
\[ 0.12345678910111213\ldots, \]
obtained by successively concatenating the decimal digits of the positive integers. The analogous construction works for any base $g\geq 2$. This result was later extended by Copeland and Erd\H{o}s \cite{CE46}, who proved the following quite general theorem:

\begin{thmA} Let $\A$ be any set of natural numbers having the property that\begin{equation}\label{eq:CEhyp} \#\{a \in \A: a \leq x\} = x^{1-o(1)} \quad \text{as } x\to\infty. \end{equation}
Let $a_1 < a_2 < a_3 < \dots$ be the list of the elements of $\A$ in increasing order. Then for each $g\geq 2$, the number $0.a_1 a_2 a_3 \dots$ obtained by concatenating the successive base $g$ digits of the $a_i$ is $g$-normal.
\end{thmA}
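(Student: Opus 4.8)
The plan is to verify the definition of $g$-normality directly: for every fixed block length $k \geq 1$ and every block $B$ of $k$ base-$g$ digits, I would show that the frequency of $B$ among the first $N$ digits of $\alpha := 0.a_1 a_2 a_3 \dots$ converges to $g^{-k}$ as $N \to \infty$. The organizing concept is that of an \emph{$(\epsilon,k)$-normal string}: a finite base-$g$ string in which every block of length $k$ occurs with frequency within $\epsilon$ of $g^{-k}$. Everything rests on two independent inputs, one statistical and one arithmetic, which I would establish separately and then combine.

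First, the statistical input, which I expect to be the main obstacle. I would show that for fixed $\epsilon, k$ all but an exponentially small proportion of length-$D$ strings are $(\epsilon,k)$-normal; quantitatively, the number of length-$D$ strings that fail to be $(\epsilon,k)$-normal is at most $g^{D(1-\delta)}$ for some $\delta = \delta(\epsilon,k,g) > 0$ and all large $D$. Modeling a uniformly random length-$D$ string, the occurrences of a fixed block $B$ at the positions lying in a fixed residue class modulo $k$ sit in disjoint windows of digits, hence are $\approx D/k$ independent Bernoulli$(g^{-k})$ trials; a Chernoff/Hoeffding bound shows that the chance their count deviates from its mean by $\gtrsim \epsilon D$ is at most $\exp(-cD)$. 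A union bound over the $k$ residue classes and the $g^k$ choices of $B$ (both constant in $D$) yields the claimed exponential rarity, and summing the geometric bound $g^{D(1-\delta)}$ over $D \leq \log_g x$ shows that the number of integers $n \leq x$ whose base-$g$ expansion is not $(\epsilon,k)$-normal is $O(x^{1-\delta})$.

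Combining this with the hypothesis $\#\{a \in \A : a \leq x\} = x^{1-o(1)}$ gives the arithmetic conclusion that $(\epsilon,k)$-normal elements have full density inside $\A$: the exceptional $a_i \leq x$ form a subset of the $O(x^{1-\delta})$ non-normal integers, hence a vanishing fraction of the $x^{1-o(1)}$ elements of $\A$ below $x$, while their total digit length is $O(x^{1-\delta}\log x)$, which will prove negligible against the full digit count.

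Finally I would assemble the concatenation estimate. Writing a prefix of $\alpha$ as $w = a_1 \cdots a_M$ with $N = \sum_{i \leq M}|a_i|$ digits, I would count the number $N_B(w)$ of occurrences of $B$ in $w$ as (i) occurrences lying inside a single term $a_i$, plus (ii) occurrences straddling a boundary between consecutive terms. For (i), each $(\epsilon,k)$-normal term contributes $g^{-k}|a_i| + O(\epsilon|a_i| + k)$ occurrences, while the non-normal terms contribute at most their total length $O(x^{1-\delta}\log x) = o(N)$; for (ii), the straddling occurrences number $O(Mk)$. The one genuinely arithmetic point needed to close the argument is that $N \gg M\log M$: since the $a_i$ are distinct positive integers, at most $g^{d}$ of them have $\leq d$ digits, so all but $o(M)$ of the first $M$ terms have $\geq (1-o(1))\log_g M$ digits, forcing $N \geq (1-o(1))M\log_g M$. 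This simultaneously makes the straddle error $O(Mk)$ negligible, shows $N = x^{1-o(1)}$ so that $O(x^{1-\delta}\log x) = o(N)$, and collapses everything to $N_B(w) = g^{-k}N + O(\epsilon N) + o(N)$. Since $\epsilon > 0$ is arbitrary (and since moving $N$ into the interior of a term alters the count by only $O(\log x) = o(N)$), the frequency of $B$ converges to $g^{-k}$; as $k$ and $B$ were arbitrary, $\alpha$ is $g$-normal.
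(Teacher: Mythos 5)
Your proposal is correct and follows essentially the same route as the paper: the paper reduces Theorem A to Proposition \ref{prop:EC} (the Copeland--Erd\H{o}s lemma that non-$(\epsilon,k)$-normal integers up to $x$ number at most $x^{\delta}$ with $\delta<1$) combined with exactly the good/bad concatenation bookkeeping you describe, including the crucial length estimate $N\asymp M\log M$ and the $O(\log x)=o(N)$ truncation correction (the paper packages this as its Theorem \ref{thm:genCE}, of which Theorem A is the special case $f(n)=a_n$). The only difference is that you supply your own proof of the rarity lemma via a Chernoff/Hoeffding bound over the $k$ residue classes of starting positions, whereas the paper cites it from Copeland--Erd\H{o}s; your probabilistic argument is sound, since the windows in a fixed residue class are disjoint and hence genuinely independent for a uniform random string.
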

\noindent
Letting $\A$ be the set of primes, this result (together with the prime number theorem) implies that the number
\[ 0.23571113171923293137\ldots \]
is $10$-normal; this answered a question left open by Champernowne.

The method of constructing normal numbers through digit concatenation has remained in vogue. In 1952, Davenport and Erd\H{o}s \cite{DE52} showed that if $f(x)$ is a nonconstant polynomial that maps positive integers to positive integers, then $0.f(1) f(2) f(3) \dots$ is normal (in any base). Forty-five years later, Nakai and Shiokawa \cite{NS97} showed that for the same class of $f$, the number $0.f(2) f(3) f(5) \dots$, with the arguments of $f$ restricted to prime values, is also normal. Analogous theorems, where $f$ is replaced by $\lfloor f\rfloor$ for certain entire functions $f$, have been given by Madritsch, Thuswaldner, and Tichy \cite{MTT08}. % Closely related results, including purely combinatorial proofs of the theorems of Davenport--Erd\H{o}s and Nakai--Shiokawa, were described by Sz\"{u}sz and Volkmann \cite{SV94}.

Quite recently, there has been interest in understanding the case when $f(n)$ is sensitive to the arithmetic properties of $n$. Let $P(n)$ denote the largest prime factor of $n$, with the convention that $P(1)=1$. Answering a question of Shparlinski, De Koninck and K{\'a}tai \cite{DKK11} showed that the numbers
\[ 0.P(1)P(2)P(3)P(4)\ldots \quad\text{and}\quad 0.P(2+1)P(3+1)P(5+1)P(7+1)\ldots \]
are both $g$-normal (for any $g\geq 2$). See \cite{DKK13} for further results of a similar flavor.

Vandehey \cite{vandehey13} studied the case $f(n) = \omega(n)$, where $\omega(n)$ denotes the number of distinct prime factors of $n$. Here it is important to ask the right question, for it is not reasonable to hope that $0.\omega(1)\omega(2)\omega(3) \dots$ be normal. Indeed, we expect by the Erd\H{o}s--Kac theorem that for almost all $n \leq x$, the first $\approx 50\%$ of the digits of $\omega(n)$ will coincide with the corresponding digits of $\lfloor \log\log{x}\rfloor$. However, if we let $\omega'(n)$ denote the truncated function that keeps only the last 49.9\% of the expected number of digits of $\omega(n)$, then Vandehey shows that $0.\omega'(1)\omega'(2)\omega'(3)\ldots$ is indeed a normal number.

In this note, we continue the theme of studying concatenations of digits of arithmetic functions.
Our first theorem describes a sufficient condition for normality, which we prove following the same strategy employed by Copeland and Erd\H{o}s. The precise statement requires two preliminary definitions. Throughout this paper, $\ln{x}$ denotes the natural logarithm, while $\log{x}$ denotes the function $\max\{1,\ln{x}\}$.

\begin{df} Let $f\colon \N \to \N$ be a positive-integer-valued arithmetic function. We say that $f$ is of \emph{weakly polynomial growth} if $f$ satisfies the condition \begin{equation}\label{eq:growthcond1}
 \sum_{m \leq x} \log{f(m)} \gg_{f} x\log{x}
\end{equation}
for large $x$, as well as the pointwise bound
\begin{equation}\label{eq:growthcond2}
 \log f(m) \ll_{f} \log{m}
\end{equation}
for all natural numbers $m$.
\end{df}

\begin{df} If $\E \subset \N$ is a set of positive integers, we say that $\E$ is  \emph{meager} if there is a $\delta < 1$ and a positive number $x_0$ so that whenever $x > x_0$,
\[ \#\E \cap [1,x] < x^{\delta}. \]
\end{df}

The following result gives our main tool for constructing normal numbers.

\begin{thm}\label{thm:genCE} Let $f \colon \N\to\N$ be a positive-integer-valued arithmetic function. Suppose that $f$ is of weakly polynomial growth and that the inverse image (under $f$) of any meager set is a set of asymptotic density zero. Then the number
\[ \alpha_f:= 0.f(1) f(2) f(3) \ldots  \]
is $g$-normal.
\end{thm}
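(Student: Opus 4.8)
The plan is to adapt the Copeland--Erd\H{o}s strategy behind Theorem A, the new ingredient being that the meagerness hypothesis lets us play the density condition \eqref{eq:growthcond1} against a large-deviation estimate for digit blocks. Fix a block $w$ of length $k$ over $\{0,1,\dots,g-1\}$; it suffices to prove that $w$ occurs in the digit string of $\alpha_f$ with limiting frequency $g^{-k}$. Write $D_m$ for the number of base-$g$ digits of $f(m)$ and $c_w(n)$ for the number of occurrences of $w$ within the base-$g$ expansion of $n$. If $L=L(M):=\sum_{m\le M} D_m$ is the length of the prefix $f(1)\cdots f(M)$, then the number of occurrences of $w$ in this prefix equals $\sum_{m\le M} c_w(f(m))$ up to at most $(k-1)M$ boundary occurrences straddling consecutive blocks. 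Condition \eqref{eq:growthcond1} gives $L\gg_f M\log M$, while \eqref{eq:growthcond2} gives $D_m\ll_f \log M$ for all $m\le M$; together these force the boundary term to be $o(L)$, and since consecutive special positions $L(M)$ differ by $D_{M+1}\ll\log M=o(L)$, a standard squeezing argument reduces everything to controlling the frequency along the positions $L(M)$.

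First I would isolate the purely combinatorial core of Copeland--Erd\H{o}s: for each $\epsilon>0$ there is a constant $c=c(\epsilon,w)>0$ such that the number of $D$-digit strings $n$ with $|c_w(n)-g^{-k}D|>\epsilon D$ is at most $g^{D}\e^{-cD}$. This is a Chernoff/method-of-moments estimate reflecting that the occurrences of $w$ at positions spaced at least $k$ apart behave like independent events for a uniformly random string. Let $\mathcal{B}=\mathcal{B}_{\epsilon,w}$ be the set of all integers failing this normality bound. Summing the estimate over $D\le \log_g x+1$ gives a geometric sum dominated by its top term, so $\#(\mathcal{B}\cap[1,x])\ll_\epsilon x^{1-c'}$ for some $c'=c'(\epsilon,w)>0$; hence $\mathcal{B}$ is \emph{meager}. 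This is the pivotal observation: by hypothesis the preimage $f^{-1}(\mathcal{B})$ then has asymptotic density zero, that is, $\#\{m\le M:f(m)\in\mathcal{B}\}=o(M)$.

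With $\mathcal{B}$ in hand I would split $\sum_{m\le M}c_w(f(m))$ into three ranges: (a) arguments $m$ with atypically few digits, say $D_m<\epsilon\log_g M$; (b) arguments with $f(m)\in\mathcal{B}$; and (c) the remaining ``good'' arguments. Range (a) contributes at most $\sum_{D_m<\epsilon\log_g M}D_m\le \epsilon M\log_g M\ll \epsilon L$ by \eqref{eq:growthcond1}; range (b) contributes at most $\sum_{f(m)\in\mathcal{B}}D_m\ll o(M)\cdot\log M=o(L)$ using \eqref{eq:growthcond2} together with the density-zero conclusion; and each good $m$ satisfies $c_w(f(m))=g^{-k}D_m+O(\epsilon D_m)$. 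Since the digit length omitted in ranges (a) and (b) is $(O(\epsilon)+o(1))L$, summing range (c) yields $\sum_{m\le M}c_w(f(m))=g^{-k}L+O(\epsilon L)+o(L)$, whence the frequency of $w$ is $g^{-k}+O(\epsilon)+o(1)$. Letting $M\to\infty$ and then $\epsilon\to0$ gives frequency exactly $g^{-k}$, and since $w$ was an arbitrary block, $\alpha_f$ is $g$-normal.

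The main obstacle, and the crux of the whole argument, is the confrontation in the second step: we need non-normal strings to be \emph{exponentially} rare, so that $\mathcal{B}$ is meager with a genuine power saving $x^{1-c'}$, because the hypothesis only promises density zero for preimages of meager sets rather than for sets that are merely sparse. Everything downstream is bookkeeping governed by the two growth conditions; the delicate points there are checking that values $f(m)$ with anomalously few digits cannot account for a positive proportion of the total digit length (range (a), controlled by \eqref{eq:growthcond1}) and that the sparse set of $m$ with $f(m)\in\mathcal{B}$ cannot either (range (b), controlled by \eqref{eq:growthcond2}).
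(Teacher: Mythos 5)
Your proposal is correct and follows essentially the same route as the paper's proof: reduce to the cut points $\sum_{m\le M} L(f(m))$ via the two growth conditions (giving $L\asymp M\log M$ and absorbing the $O(M)$ boundary occurrences), split $m$ according to whether $f(m)$ lies in the exceptional set of non-normal integers, and apply the meager-preimage hypothesis — your set $\mathcal{B}$ is precisely the set $\E_{\epsilon,k}$ of Proposition \ref{prop:EC}, which the paper quotes from Copeland--Erd\H{o}s while you re-derive its meagerness by a Chernoff bound (valid, since occurrences of $\w$ at positions spaced at least $k$ apart depend on disjoint digits and are genuinely independent). Your extra range (a) of arguments with few digits is harmless but unnecessary, since the error $O(\epsilon D_m)$ for good $m$ already scales with the digit length, which is exactly how the paper's single good/bad dichotomy handles small values of $f(m)$.
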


\begin{rmk} If $\A \subset \N$ satisfies \eqref{eq:CEhyp}, then the hypotheses of Theorem \ref{thm:genCE} hold with $f(n)=a_n$. So Theorem A is a special case of Theorem \ref{thm:genCE}.
\end{rmk}

While Theorem \ref{thm:genCE} has its origin in the classic Copeland--Erd\H{o}s work, it has novel consequences. Let $\phi(n):=\#(\Z/n\Z)^{\times}$ be the Euler totient function, and let $\sigma(n):=\sum_{d \mid n}d$ be the usual sum-of-divisors function. Let $\lambda(n)$ denote Carmichael's lambda-function, defined as the exponent of the group $(\Z/n\Z)^{\times}$. As our main application of Theorem \ref{thm:genCE}, we produce a wide class of normal numbers arising from compositions of $\phi$, $\sigma$, and $\lambda$.

\begin{thm}\label{thm:main} Fix $g \geq 2$. Let $f\colon \N\to\N$ be any arithmetic function defined by some composition of $\phi$, $\sigma$, or $\lambda$; that is, $f = f_1 \circ f_2 \circ \dots \circ f_j$ for some $j \geq 1$, where each $f_i \in \{\phi, \sigma, \lambda\}$.
Then both of the numbers
\begin{equation}\label{eq:main1} 0.f(1) f(2) f(3) f(4)\ldots \end{equation}
	and
\begin{equation}\label{eq:main2} 0.f(2) f(3) f(5) f(7)\ldots \end{equation}
	are $g$-normal.
\end{thm}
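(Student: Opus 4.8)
The plan is to deduce both assertions from Theorem \ref{thm:genCE}: for \eqref{eq:main1} I would verify directly that every composition $f=f_1\circ\cdots\circ f_j$ satisfies the three hypotheses of that theorem, and for \eqref{eq:main2} I would run the same verification inside an analogue of Theorem \ref{thm:genCE} for prime arguments (provable by the same Copeland--Erd\H{o}s argument). The natural way to organize the verification is an induction on $j$. Call an arithmetic function $h$ \emph{admissible} if (i) $\log h(n)\ll\log n$ for all $n$, (ii) for every $\epsilon>0$ the set $\{n:h(n)<n^{1-\epsilon}\}$ is meager, and (iii) the preimage $h^{-1}(\E)$ of every meager set $\E$ is again meager. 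Two observations drive the argument. First, admissibility is closed under composition: for $h\circ g$ one has $h^{-1}(\E)$ meager by (iii) for $h$, and then $g^{-1}(h^{-1}(\E))$ meager by (iii) for $g$, so property (iii) for $g$ lets one pull the meager exceptional sets for $h$ back through $g$, and $h\circ g$ inherits (i)--(iii). Second, admissibility implies the hypotheses of Theorem \ref{thm:genCE}: (i) is exactly \eqref{eq:growthcond2}; (ii) forces $f(m)\geq m^{1-\epsilon}$ off a set of density zero, whence $\sum_{m\leq x}\log f(m)\geq(1-\epsilon)(1+o(1))\,x\log x$, giving \eqref{eq:growthcond1}; and (iii) yields the preimage condition, since meager sets have density zero. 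It therefore suffices to prove that each of $\phi$, $\sigma$, $\lambda$ is admissible.

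For $\phi$ and $\sigma$, properties (i) and (ii) are immediate from the classical bounds $n/\log\log n\ll\phi(n)\leq n$ and $n\leq\sigma(n)\ll n\log\log n$; in fact $\phi(n)\geq n^{1-\epsilon}$ and $\sigma(n)\geq n$ hold with only finitely many (resp.\ no) exceptions, so the exceptional sets in (ii) are finite. The substance is property (iii): one must show that neither $\phi$ nor $\sigma$ can compress a polynomially sparse set into one of more than polynomially sparse cardinality. Here I would count $\{m\leq x:\phi(m)\in\E\}$ by splitting on the largest prime factor $P(m)$: for all but a meager set of $m$, $P(m)$ is large and appears to the first power, so that $\phi(m)=(P(m)-1)\phi(m/P(m))$ recovers a large prime factor of $m$ from $\phi(m)$, limiting how many $m$ can share a given $\phi$-value. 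Controlling the genuinely high-multiplicity fibers of $\phi$ requires the distribution theory of totients (Erd\H{o}s, Ford); the analogue for $\sigma$ is handled the same way via $P(m)+1\mid\sigma(m)$.

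The function $\lambda$ is the main obstacle, and both (ii) and (iii) demand real input. Unlike $\phi$, Carmichael's function has no good pointwise lower bound --- $\lambda(n)$ can be as small as a power of $\log n$ --- so (ii) cannot be obtained from a worst-case estimate. Instead I would invoke the Erd\H{o}s--Pomerance--Schmutz determination of the normal order of $\lambda$, namely $n/\lambda(n)=(\log n)^{(1+o(1))\log\log\log n}$ for almost all $n$, strengthened to the statement that $\{n:\lambda(n)<n^{1-\epsilon}\}$ is meager; the engine behind such a bound is that a typical $n$ has a prime factor $p$ with $p-1$ divisible by a large prime, which then divides $\lambda(n)$. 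The same mechanism drives (iii): since $\lambda(n)$ is a multiple of $p-1$ for each $p\parallel n$, a large prime factor of $n$ forces a large prime factor into $\lambda(n)$, and one bounds the number of $n\leq x$ whose $\lambda$-value lands in a sparse set by controlling the anatomy of the numbers $p-1$. I expect the delicate point throughout to be quantifying every exceptional set strongly enough to be \emph{meager} rather than merely of density zero, since meagerness is exactly what makes the induction on $j$ close.

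Finally, for the prime-restricted number \eqref{eq:main2} I would argue identically, working with the sequence $f(2),f(3),f(5),\dots$ inside the prime analogue of Theorem \ref{thm:genCE}. Because $\phi(p)=\lambda(p)=p-1$ and $\sigma(p)=p+1$, evaluating a composition at a prime $p$ amounts to evaluating the remaining composition at the shifted prime $p\pm1$, so every estimate reduces to its counterpart for the sequence of shifted primes. The admissibility of $\phi,\sigma,\lambda$ along this sequence then follows from the same large-prime-factor considerations, now fed by standard sieve results on the anatomy of $p-1$ (in particular the existence, for almost all primes $p$, of a large prime factor of $p-1$). The main obstacle is again $\lambda$, compounded by the need for shifted-prime inputs in place of the corresponding statements for all integers.
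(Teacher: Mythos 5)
Your overall architecture (verify the hypotheses of Theorem \ref{thm:genCE}, close the verification under composition, handle the prime-indexed case by composing with a further map) matches the paper's, but the sparseness class you propagate through the induction --- \emph{meager} sets --- makes the plan unworkable, and this is precisely the difficulty the paper is built around. Your property (ii) for $\lambda$ is not a theorem: the strongest known upper bound for $\#\{n\le x:\lambda(n)<n^{1/2}\}$ is the Friedlander--Pomerance--Shparlinski estimate of shape $x/\exp(c(\log x\log\log x)^{1/3})$ (this is the paper's Lemma \ref{lem:FPS}), which is enormously larger than $x^{\delta}$ for every fixed $\delta<1$; in the other direction, Erd\H{o}s-type constructions (take $n$ to be a product of many primes $q$ with $q-1$ dividing one fixed, highly composite $M$, so that $\lambda(n)\mid M$) produce, at least under standard conjectures on smooth values of $q-1$, some $x^{1-o(1)}$ integers $n\le x$ with $\lambda(n)<n^{1/2}$. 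So the exceptional set in (ii) is not merely ``delicate to quantify as meager''; it is conjecturally not meager at all, and the Erd\H{o}s--Pomerance--Schmutz normal-order result you invoke cannot help, since normal-order statements control only density. Your property (iii) is in the same boat even for $\phi$: Erd\H{o}s proved that $\#\phi^{-1}(m)\ge m^{c}$ for infinitely many $m$ and some fixed $c>0$, and conjecturally the exponent is $1-o(1)$; a meager set containing one such high-multiplicity totient per dyadic block would then have a non-meager preimage. Thus ``meager in, meager out'' is false, or at best far beyond current technology, for each of $\phi,\sigma,\lambda$, and your induction cannot close.

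The paper's fix is to weaken the class being propagated. It calls $\E$ \emph{thin} if $\#\E\cap[1,x]<x/\exp((\log x)^{\theta})$ for some $\theta>0$: this notion is weak enough to be preserved under preimages (Lemma \ref{lem:smallpreimage}, proved via the divisor-counting bound of Lemma \ref{lem:phidivupper} for the event $d\mid a(n)$, the bound of Lemma \ref{lem:toomanyprimes} on $\Omega(a(n))$, and the FPS estimate), yet still strong enough to imply density zero --- which is all that Theorem \ref{thm:genCE} actually demands of the preimage of a meager set. Since every meager set is thin, the preimage under $f$ of a meager set is thin, hence of density zero, and the argument closes; likewise the growth condition \eqref{eq:growthcond1} only needs $f(n)\ge n^{1/2^j}$ outside a thin set (Lemma \ref{lem:usuallylarge}), not outside a meager one. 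Note also that your plan for the prime-indexed number \eqref{eq:main2} is far more elaborate than necessary: no shifted-prime sieve input is needed. The paper simply writes $n\mapsto f(p_n)$ as $f\circ\iota$ with $\iota(n)=p_n$, observes that by the prime number theorem $\iota$-preimages of thin sets are thin, and applies Theorem \ref{thm:genCE} verbatim to the composite function.
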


The plan of the paper is as follows: The proof of Theorem \ref{thm:genCE} is given in \S\ref{sec:EC}. In \S\ref{sec:arith}, we present an assortment of results on $\phi$, $\sigma$, and $\lambda$ needed for the proof of Theorem \ref{thm:main}, which appears in \S\ref{sec:proofs}. We conclude the paper in \S\ref{sec:concluding} by noting other families of normal numbers that we can produce with our methods.

\subsection*{Notation} We continue to use $\omega(n)$ for the number of distinct primes dividing $n$, and we write $\Omega(n)$ for the number of primes dividing $n$ counted with multiplicity. We also write $p_n$ for the $n$th prime in the usual increasing order. If $\Ss$ is a subset of $\N$, the \emph{asymptotic density of $\Ss$} is the limit
$\lim_{x\to\infty}\frac{\#\Ss\cap[1,x]}{x}$,
if this limit exists. We use $O$ and $o$-notation, as well as the Vinogradov symbols $\ll$, $\gg$, and $\asymp$, with their usual meanings. Implied constants are absolute unless otherwise specified. We remind the reader that $\log{x} := \max\{1,\ln{x}\}$. We also use $\log_{k}{x}$ to denote the $k$th iterate of $\log{x}$. Note that with this definition, $\log_k{x}\ge 1$ whenever $x>0$.

\section{Proving normality \`a la Copeland and Erd\H{o}s}\label{sec:EC}
For the rest of this paper, we assume that the base $g\geq 2$ has been fixed once and for all. It will be convenient to have notation in place to formalize some of the concepts discussed loosely in the introduction; for this, we largely follow De Koninck and K{\'a}tai \cite{DKK11}. Let $A = \{0, 1, \dots, g-1\}$. An expression of the form $\w = a_1 a_2 \dots a_{\ell}$, where each $a_i \in A$, will be called a \emph{word of length $\ell$} on the alphabet $A$. If $n$ is a positive integer with canonical $g$-adic expansion
\begin{equation}\label{eq:gadic} n = d_0 + d_1 g + \dots + d_{t} g^{t}, \end{equation} we associate the word
\[ \overline{n} := d_0 d_1 \cdots d_t \in A^{t+1}. \]
For each positive integer $n$, we let $L(n)=t+1$ denote the length of  $\overline{n}$. Then
\begin{equation}\label{eq:lengthbound} \frac{\ln{n}}{\ln{g}} < L(n) \leq \frac{\ln{n}}{\ln{g}}+1. \end{equation} 
If $\w$ is a word on $A$, and $n$ is a positive integer, we let $\nu(n;\w)$ denote the number of occurrences of $\w$ in $\widebar{n}$. So if $\w$ has length $\ell$ and $n$ has the expansion \eqref{eq:gadic}, then $\nu(n;\w)$ is the number of times that $d_j d_{j+1} \cdots d_{g+\ell-1} = \w$ for $j=0, 1, 2, \dots, t-(\ell-1)$.

The next definition is due to Besicovitch \cite{besicovitch35}:

\begin{df} Let $\epsilon >0$ and let $k \in \N$. The positive integer $n$ is said to be \emph{$(\epsilon, k)$-normal} if for every $\w \in A^{k}$, we have
\[ (g^{-k} -\epsilon) \cdot L(n) < \nu(n;\w) <  (g^{-k} +\epsilon) \cdot L(n). \]
\end{df}

The following crucial proposition is due to Copeland and Erd\H{o}s \cite[Lemma, p. 858]{CE46}.

\begin{prop}\label{prop:EC} Let $\epsilon > 0$ and let $k \in \N$. Let $\E_{\epsilon, k}$ denote the set of positive integers that are \emph{not} $(\epsilon,k)$-normal. There is a $\delta = \delta(\epsilon, k, g) < 1$ so that
\[ \#\E_{\epsilon, k} < x^{\delta} \]
for all large $x$, say $x > x_0(\epsilon, k, g)$.
\end{prop}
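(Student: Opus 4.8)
The proof is a counting argument of large-deviation type. The plan is to fix a word length $\ell$, bound the number of base-$g$ words of length $\ell$ that violate the $(\epsilon,k)$-normality condition for at least one target word $\w \in A^{k}$, and then sum this bound over all $\ell \leq L(\lfloor x\rfloor) \approx \ln{x}/\ln{g}$. The key phenomenon is that at each fixed length the proportion of offending words decays exponentially, and summing such a saving against the $g^{\ell}$ total words of length $\ell$ produces a global power saving $x^{\delta}$ with $\delta<1$.

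First I would fix a single $\w \in A^{k}$ and estimate the number of words $v = v_0 v_1 \cdots v_{\ell-1}$ on $A$ for which $\nu(v;\w)$ deviates from $g^{-k}\ell$ by more than $\epsilon \ell$. The obstacle here is that occurrences of $\w$ at consecutive starting positions overlap, so the associated indicator events are dependent. To sidestep this, I would partition the starting positions $0, 1, \dots, \ell-k$ according to their residue class modulo $k$. Within a fixed class the windows occupy pairwise disjoint blocks of $k$ consecutive digits, so for a uniformly random word the events ``the block beginning at this position equals $\w$'' are genuinely independent, each of probability exactly $g^{-k}$. The count of aligned occurrences in a single class is thus a sum of independent Bernoulli variables, to which a Chernoff (or Hoeffding) bound applies: the fraction of length-$\ell$ words whose count in that class deviates from its mean by more than $\epsilon\ell/k$ is at most $2\e^{-c\ell}$ for some constant $c = c(\epsilon,k)>0$.

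Since $\nu(v;\w)$ is the sum over the $k$ residue classes, and its mean $(\ell-k+1)g^{-k}$ differs from $g^{-k}\ell$ by only $O_k(1)$, any deviation of $\nu(v;\w)$ exceeding $\epsilon\ell$ forces some one class to deviate by more than $\epsilon\ell/k$, provided $\ell$ is large. Hence the number of bad length-$\ell$ words for the fixed $\w$ is at most $g^{\ell}\cdot 2k\e^{-c\ell}$, and a union bound over the $g^{k}$ choices of $\w$ multiplies this by $g^{k}$. Finally I would sum over lengths. Because $g^{L}\asymp x$ when $L\approx \ln{x}/\ln{g}$, the per-length saving converts the geometric sum $\sum_{\ell\le L} g^{\ell}\e^{-c\ell}$ into a quantity of size $x^{1-c/\ln{g}}$, up to the factor $2kg^{k}$ and the negligible contribution of the boundedly many small $\ell$ (bounded by a constant, hence harmless for large $x$). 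Choosing $\delta$ slightly above $1-c/\ln{g}$ then gives $\#(\E_{\epsilon,k}\cap[1,x]) < x^{\delta}$ for all large $x$. The one genuinely delicate step is the dependence among overlapping windows addressed in the second paragraph; once the residue-class decomposition restores independence, the remainder is a routine large-deviation estimate followed by the geometric summation that upgrades an exponential saving at each length into a power saving overall.
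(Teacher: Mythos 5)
Your argument is correct, but there is nothing in the paper to compare it with line by line: the paper does not prove Proposition \ref{prop:EC} at all, it simply quotes it as a known result of Copeland and Erd\H{o}s \cite{CE46}. Your write-up is therefore a valid self-contained replacement for that citation, and it is in substance a modernized form of the classical argument: the original proofs (Besicovitch's, and the lemma as used by Copeland--Erd\H{o}s) likewise proceed length by length, counting the words of length $\ell$ in which some $\w \in A^k$ occurs abnormally often or rarely, and they handle the overlapping-windows issue by the same device of splitting starting positions into residue classes modulo $k$ (equivalently, reading the word in $k$ phases as a string of base-$g^k$ digits); the difference is only that where you invoke a Hoeffding/Chernoff bound, the classical treatment uses explicit binomial-coefficient estimates of Stirling type, which produce the same per-length saving $g^{\ell}e^{-c\ell}$. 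Your concluding geometric summation $\sum_{\ell\le L}g^{\ell}e^{-c\ell}\ll x^{1-c/\ln g}$ (with the case $ge^{-c}\le 1$ being even better), followed by choosing $\delta$ strictly between $1-c/\ln g$ and $1$, is exactly the right way to convert the exponential saving into a power saving. Two minor points to tighten: the pigeonhole step actually yields a per-class deviation of $(\epsilon\ell-k)/k$ rather than $\epsilon\ell/k$, so you should run the concentration bound with threshold $\epsilon\ell/(2k)$, valid once $\ell\ge 2k/\epsilon$ --- a harmless constant adjustment that you already anticipate by requiring $\ell$ large; and the words arising from integers have nonzero leading digit, but since you only need an upper bound, counting all $g^{\ell}$ words of length $\ell$ is legitimate.
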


We are now in a position to prove Theorem \ref{thm:genCE}. Throughout the following proof, implied constants are allowed to depend on $g$, $k$, and $f$.

\begin{proof}[Proof of Theorem \ref{thm:genCE}]
For each natural number $N$, we let $\a_{f,N}$ be the word of length $N$ obtained by truncating $\widebar{f(1)} \widebar{f(2)} \widebar{f(3)} \ldots$ after the $N$th place. Fix an arbitrary word $\w$ on the alphabet $A$, and let $k$ be the length of $\w$. Let $V$ be the number of occurrences of $\w$ in $\a_{f,N}$; we must show that
\[ V \sim \frac{N}{g^k} \quad\text{as }N\to\infty.\]
The truncation process used to create $\a_{f,N}$ cuts $\widebar{f(1)} \widebar{f(2)} \widebar{f(3)} \ldots$ either in the middle or right before the occurrence of the word $\widebar{f(n)}$ for a certain $n$. For this $n$, we have $\sum_{m=1}^{n-1} L(f(m)) \leq N \leq \sum_{m=1}^{n} L(f(m))$,
and thus
\[ 0 \leq \sum_{m=1}^{n} L(f(m)) - N \leq L(f(n)). \]
From \eqref{eq:growthcond2}, we have $L(f(n)) \ll \log{n} \ll \log{N}$. So as $N\to\infty$, \begin{equation}\label{eq:lengthasymptotic} \sum_{m=1}^{n} L(f(m)) \sim N. \end{equation}

Next, observe that $V$ is within $O(n)$ of $\sum_{m=1}^{n}\nu(f(m),\w)$. This $O(n)$ term  bounds the number of occurrences of $\textbf{w}$ that overlap multiple words $\widebar{f(m)}$, and it takes into account the error obtained by seeing only part (or none) of the word $\widebar{f(n)}$ in $\a_{f,N}$.

To proceed further, we fix $\epsilon > 0$, and we divide the integers $m \in [1,n]$ into two classes according to whether $f(m)$ is $(\epsilon,k)$-normal or not; we call $m$ \emph{good} or \emph{bad} accordingly. From \eqref{eq:lengthasymptotic},
\begin{align*} \left|\sum_{m=1}^{n} \nu(f(m),\w) - \frac{N}{g^k}\right| &\leq \left|\sum_{m=1}^{n} \nu(f(m),\w) - \frac{1}{g^k}\sum_{m=1}^{n} L(f(m))\right| + o(N) \\&\leq \sum_{m=1}^{n} \left|\nu(f(m),\w) - \frac{1}{g^k}L(f(m))\right| + o(N).	
\end{align*}
Each good value of $m$ contributes at most $\epsilon L(f(m))$ to the right-hand sum. When $m$ is bad, we use the crude estimate $\nu(f(m),\w) \leq L(f(m)) \ll \log{m}$ to see that each bad $m$ contributes $O(\log{n})$. Putting everything together, we deduce that
\begin{equation}\label{eq:Vdiff0}\left|V - \frac{N}{g^k}\right| \leq \epsilon \sum_{m=1}^{n}L(f(m)) + O(\log{n} \cdot \#\{\text{bad m}\}) + O(n) + o(N). \end{equation}
From the growth bounds \eqref{eq:growthcond1} and \eqref{eq:growthcond2}, we have $\sum_{m=1}^{n}L(f(m)) \asymp n\log{n}$, and so \eqref{eq:lengthasymptotic} shows that
\[ N \asymp n\log{n} \]
for large $N$. So dividing \eqref{eq:Vdiff0} by $N$, we obtain for $N$ going to infinity that
\begin{align*} \left|\frac{V}{N} - \frac{1}{g^k}\right| &\leq \epsilon \left(\frac{1}{N} \sum_{m=1}^{n}L(f(n))\right) + O\left(\frac{1}{N}\log{n} \cdot \#\{\text{bad m}\}\right) + O\left(\frac{n}{N}\right) + o(1) \\
&\leq \epsilon \cdot (1+o(1)) + O\left(\frac{1}{n}\cdot \#\{\text{bad m}\}\right) + o(1).
\end{align*}
Now $m$ is bad precisely when $m \in f^{-1}(\E_{\epsilon, k})$. By Proposition \ref{prop:EC}, the set $\E_{\epsilon,k}$ is meager, and so all bad $m$ are restricted to a set of density zero. Consequently, the number of bad $m$ in $[1,n]$ is $o(n)$ as $N\to\infty$. Hence,
\[ \limsup_{N\to\infty} \left|\frac{V}{N} - \frac{1}{g^k}\right| \leq \epsilon. \]
Since $\epsilon >0$ was arbitrary, it follows that $V \sim N/g^k$ as $N\to\infty$.
\end{proof}

We emphasize that the growth conditions \eqref{eq:growthcond1} and \eqref{eq:growthcond2} are not necessary for this proof to work, and many other growth conditions would work in their place.

To prove Theorem \ref{thm:main}, we will show that if $f$ is an arbitrary composition of $\phi$, $\sigma$, and $\lambda$,  then the hypotheses of Theorem \ref{thm:main} hold for both $f(n)$ and $f(p_n)$.

\section{Arithmetic preparation}\label{sec:arith}

Here we collect some lemmas needed for the eventual proof of Theorem \ref{thm:main}. We begin with the well-known determinations of the minimal order of the Euler function \cite[Theorems 328, p. 352]{HW08} and the maximal order of the sum-of-divisors function \cite[Theorems 323, p. 350]{HW08}.

\begin{lem}\label{lem:minimalorder} Let $\gamma = 0.5772156649\dots$ denote the Euler--Mascheroni constant. Then
\[ \liminf_{m\to\infty} \frac{\phi(m)}{m/\log_2{m}} = e^{-\gamma}, \]
and
\[ \limsup_{m\to\infty} \frac{\sigma(m)}{m \log_2{m}} = e^{\gamma}. \]
\end{lem}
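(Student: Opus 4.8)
Both assertions are classical and follow from Mertens' theorems together with the prime number theorem; I would reduce each to these inputs rather than reproving them from scratch. The two external facts I rely on are Mertens' third theorem, in the forms
\[ \prod_{p\le y}\Bigl(1-\tfrac1p\Bigr) = \frac{e^{-\gamma}}{\ln y}\,(1+o(1)) \qquad\text{and}\qquad \prod_{p\le y}\Bigl(1-\tfrac1p\Bigr)^{-1} = e^{\gamma}\ln y\,(1+o(1)), \]
and the Chebyshev/prime number theorem estimate $\theta(y):=\sum_{p\le y}\ln p \sim y$ (equivalently $\ln\bigl(\prod_{p\le y}p\bigr)\sim y$).

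For the totient I would first establish that the liminf is $\le e^{-\gamma}$ by exhibiting the extremal family. Taking $m=\prod_{p\le y}p$ gives $\phi(m)/m=\prod_{p\le y}(1-1/p)\sim e^{-\gamma}/\ln y$ by Mertens, while $\ln m=\theta(y)\sim y$ forces $\log_2 m\sim\ln y$; hence $\frac{\phi(m)}{m/\log_2 m}\to e^{-\gamma}$ along primorials. For the matching lower bound I would argue that for any $m$ with distinct prime factors $q_1<\dots<q_r$ (so $r=\omega(m)$), monotonicity of $t\mapsto 1-1/t$ yields $\frac{\phi(m)}{m}=\prod_i(1-1/q_i)\ge\prod_{j\le r}(1-1/p_j)=\frac{e^{-\gamma}}{\ln p_r}(1+o(1))$, replacing each prime factor by the smallest available prime. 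Since $m\ge\prod_{j\le r}p_j$ gives $\ln m\ge\theta(p_r)\sim p_r$, we get $\log_2 m\ge\ln p_r\,(1+o(1))$, and multiplying the two estimates produces $\frac{\phi(m)}{m/\log_2 m}\ge e^{-\gamma}(1+o(1))$.

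The one subtlety, which I regard as the main point to get right, is that these error terms are controlled only as $r=\omega(m)\to\infty$. I would therefore split into cases: if $\omega(m)$ stays bounded then $\phi(m)/m$ is bounded below by a positive constant while $\log_2 m\to\infty$, so $\frac{\phi(m)}{m/\log_2 m}\to\infty$ and such $m$ are irrelevant to the liminf; if $\omega(m)\to\infty$ the asymptotic analysis above applies. Together these give $\liminf = e^{-\gamma}$.

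For the sum-of-divisors the argument is dual, resting on $\frac{\sigma(m)}{m}=\prod_{p^a\|m}\frac{1-p^{-(a+1)}}{1-p^{-1}}\le\prod_{p\mid m}(1-1/p)^{-1}$. The upper bound on the limsup mirrors the totient lower bound: monotonicity gives $\prod_{p\mid m}(1-1/p)^{-1}\le\prod_{j\le r}(1-1/p_j)^{-1}\sim e^{\gamma}\ln p_r$, and $\log_2 m\ge\ln p_r\,(1+o(1))$ as before, so $\frac{\sigma(m)}{m\log_2 m}\le e^{\gamma}(1+o(1))$, again handling bounded $\omega(m)$ separately. The harder half here is the construction achieving $e^{\gamma}$: squarefree primorials $\prod_{p\le y}p$ only reach $\frac{6}{\pi^2}e^{\gamma}$, because $\prod_{p\le y}(1+1/p)=\prod_{p\le y}(1-1/p^2)\prod_{p\le y}(1-1/p)^{-1}$ and $\prod_p(1-1/p^2)=6/\pi^2$. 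To recover the full constant I would take $m=\bigl(\prod_{p\le y}p\bigr)^{a}$ with $a=a(y)\to\infty$ slowly, say $a=\lfloor\ln y\rfloor$. Then $\frac{\sigma(m)}{m}=\prod_{p\le y}\frac{1-p^{-(a+1)}}{1-p^{-1}}$ agrees with $\prod_{p\le y}(1-1/p)^{-1}\sim e^{\gamma}\ln y$ up to the factor $\prod_p(1-p^{-a-1})\to 1$, while $\ln m=a\,\theta(y)$ together with $\ln a=o(\ln y)$ ensures $\log_2 m=\ln(a\,\theta(y))\sim\ln y$. Hence $\frac{\sigma(m)}{m\log_2 m}\to e^{\gamma}$ along this family, completing the evaluation of the limsup.
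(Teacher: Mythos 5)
Your proof is correct, but there is nothing in the paper to compare it against: the paper does not prove this lemma at all, instead quoting it as a classical fact with a citation to Hardy and Wright \cite{HW08} (Theorem 328, p.~352, for the minimal order of $\phi$, and Theorem 323, p.~350, for the maximal order of $\sigma$). What you have written is essentially the standard textbook argument sitting behind that citation: Mertens' third theorem evaluates $\prod_{p\le y}(1-1/p)$, primorials (respectively, high powers of primorials) realize the extremal values, and the matching bound for general $m$ comes from replacing the prime factors $q_1<\dots<q_r$ of $m$ by the first $r$ primes, together with $\log_2 m \ge (1+o(1))\ln p_r$. You also correctly negotiate the two genuine traps in this argument: the Mertens asymptotics apply only in the regime $r=\omega(m)\to\infty$, which you handle with a separate (and correct) case for bounded $\omega(m)$, where the ratio tends to $\infty$ for $\phi$ and to $0$ for $\sigma$; and for $\sigma$ the squarefree primorials fall short of the limsup by a factor of $6/\pi^2$, which you repair by taking $m=\bigl(\prod_{p\le y}p\bigr)^a$ with $a\to\infty$ slowly, so that $\prod_{p\le y}\bigl(1-p^{-(a+1)}\bigr)\to 1$ while $\log_2 m\sim\ln y$ still holds. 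One small slip of terminology: $\theta(y)\sim y$ is the prime number theorem, not a Chebyshev estimate; this is harmless, however, since your argument only ever uses $\ln\theta(y)\sim\ln y$, which already follows from Chebyshev's $\theta(y)\asymp y$.
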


The function $\lambda(n)$ is more erratic than $\phi$ or $\sigma$ and occasionally takes values as small as $n^{o(1)}$. The following result serves as a substitute for Lemma \ref{lem:minimalorder} in this case.

\begin{lem}\label{lem:FPS} The number of $n \leq x$ where $\lambda(n) < n^{1/2}$ is at most
	\[ x/\exp((\log{x})^{1/3}) \]
	for all large $x$.
\end{lem}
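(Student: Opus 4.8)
The plan is to translate the condition $\lambda(n) < n^{1/2}$ into a statement about multiplicative overlap among the numbers $p-1$ ($p \mid n$), and then to count the offending $n$ by a sieve. Since $\lambda(n) \mid \phi(n)$, and since Lemma \ref{lem:minimalorder} gives $\phi(n) \gg n/\log_2 n$, the hypothesis forces $\phi(n)/\lambda(n) \gg n^{1/2}/\log_2 n$. I would first dispose of integers with large squarefull part: because squarefull numbers up to $t$ number $O(t^{1/2})$, the number of $n \le x$ whose squarefull part exceeds $\exp(2(\log x)^{1/3})$ is $O(x/\exp((\log x)^{1/3}))$, already within the target. For the remaining $n$, the quotient $\phi(n)/\lambda(n)$ is essentially that of $\mathrm{rad}(n)$, reducing the problem to squarefree $n$, for which $\phi(n)/\lambda(n) = \prod_{p\mid n}(p-1)\big/\lcm_{p\mid n}(p-1)$.

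The heart of the matter is thus to bound the number of squarefree $n \le x$ with $\lcm_{p\mid n}(p-1) < n^{1/2}$ while $\prod_{p\mid n}(p-1)\asymp n$. Writing $\log(\phi(n)/\lambda(n)) = \sum_q \log q\,(S_q - M_q)$ with $S_q = \sum_{p\mid n} v_q(p-1)$ and $M_q = \max_{p\mid n} v_q(p-1)$, and noting $\sum_q S_q\log q = \log\phi(n)\approx \log n$, the condition says that the ``redundant'' weight $\sum_q (S_q-M_q)\log q$ exceeds half of $\log n$. Equivalently, for a total weight $\gg \log n$ of prime powers $q^j$, the integer $n$ must possess at least two prime factors $p \equiv 1 \pmod{q^j}$. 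Since the primes $p \le t$ with $p \equiv 1 \pmod{q^j}$ number $\ll t/(\phi(q^j)\log t)$ by Brun--Titchmarsh, demanding many such coincidences is expensive, and this is what should make the bad $n$ rare.

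To extract the stated savings I would fix the cutoff $y = \exp((\log x)^{2/3})$ and split according to $P(n)$. If $P(n)\le y$, then $n$ is $y$-smooth, and de Bruijn's estimate bounds the count by $x\exp(-(1+o(1))u\log u)$ with $u = \log x/\log y = (\log x)^{1/3}$, which is $\le x/\exp((\log x)^{1/3})$ for large $x$; this term saturates the claimed bound. When $P(n) > y$, the prime $P = P(n)$ satisfies $P-1\mid \lambda(n) < n^{1/2}$, so $P-1$ must share a large common factor with the lcm of the remaining $p-1$, forcing $P$ into a progression to a large modulus; since such an $n$ requires on the order of $(\log x)^{1/3}$ prime factors drawn from these sparse progressions, Brun--Titchmarsh should cost a factor $\exp(-(\log x)^{1/3})$ here as well. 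The balance $u\cdot\log y = \log x$ between the number of prime factors and their typical size is what pins the exponent at $1/3$.

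The step I expect to be the main obstacle is organizing the $P(n) > y$ count so as to avoid a circularity. The tempting move --- peel off $P$, write $n = Pm$, and use $P \equiv 1 \pmod{D}$ for a large $D \mid \lambda(m)$ --- reduces the problem to a weighted sum such as $\sum_m m/\lambda(m)$; but this sum is dominated precisely by the integers $m$ with $\lambda(m)$ small, i.e.\ by the very objects we are trying to prove scarce, so the induction does not close. I would instead count globally: fix the overlap data (the prime powers $q^j$ that are shared, equivalently a modulus recording where $\prod_{p\mid n}(p-1)$ exceeds $\lcm_{p\mid n}(p-1)$) and bound, all at once through a single upper-bound sieve, the number of $n \le x$ compatible with that data, finally summing over admissible data. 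Controlling this last sum, and verifying that the Brun--Titchmarsh savings genuinely survive it, is the technical crux.
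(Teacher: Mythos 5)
Your proposal is a research plan rather than a proof, and the plan stops exactly where the difficulty lives. The paper's own argument is two lines long: it quotes a theorem of Friedlander, Pomerance, and Shparlinski \cite[Theorem 5]{FPS01} --- for all large $x$ and $\Delta \geq (\log\log x)^3$, the number of $n \leq x$ with $\lambda(n) \leq n\exp(-\Delta)$ is at most $x/\exp(0.69\,(\Delta\log\Delta)^{1/3})$ --- discards $n \leq x^{2/3}$ trivially, and applies the theorem with $\Delta = \frac{1}{3}\log x$. What you outline amounts to reproving that theorem from scratch. Your smooth case is fine but routine: with $y = \exp((\log x)^{2/3})$ and $u = (\log x)^{1/3}$, de Bruijn indeed gives $x\exp(-(1+o(1))u\log u) \leq x/\exp((\log x)^{1/3})$. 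But you explicitly concede that the case $P(n) > y$ --- fixing the ``overlap data'' and verifying that the Brun--Titchmarsh savings survive the sum over configurations --- is unresolved (``the technical crux''). That unresolved step is the entire content of the lemma; everything preceding it (large squarefull part, reduction to squarefree $n$, the identity $\log(\phi(n)/\lambda(n)) = \sum_q \log q\,(S_q - M_q)$) is preparatory.

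Moreover, the one concrete deduction you offer in the large-prime case is wrong as stated. Writing $n = Pm$ with $P = P(n)$, one has $\lambda(n) = \lcm(P-1,\lambda(m)) = (P-1)\lambda(m)/\gcd(P-1,\lambda(m))$, so $\lambda(n) < n^{1/2}$ yields only $\gcd(P-1,\lambda(m)) > (P-1)\lambda(m)/(Pm)^{1/2}$. In the regime that matters --- $P$ near $y = \exp((\log x)^{2/3}) = n^{o(1)}$ and $\lambda(m)$ small, which is precisely the bad configuration --- this lower bound is less than $1$ and forces nothing: $P-1 \mid \lambda(n) < n^{1/2}$ places no congruence condition to a large modulus on $P$ unless $P$ itself exceeds $n^{1/2}$. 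The redundancy is forced only in aggregate over many prime factors, and the bookkeeping cost of the aggregate data is genuinely dangerous: the number of candidate ``overlap'' configurations can be comparable to, or larger than, the factor $\exp((\log x)^{1/3})$ you hope to save, so a naive union bound over configurations swamps the Brun--Titchmarsh gain. (The shape of the FPS bound, a saving of $\exp(0.69(\Delta\log\Delta)^{1/3})$ rather than $\exp(c\Delta^{1/3})$, already hints that this optimization is delicate.) If you want a self-contained argument, the device that closes the count is not peeling off $P(n)$ but summing over the possible values $L$ of $\lambda(n)$: for squarefree $n$ every prime $p \mid n$ satisfies $p-1 \mid L$, so $n$ is built multiplicatively from at most $\tau(L)$ primes, and one counts such $n$ directly for each $L$. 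Otherwise, do what the paper does and cite \cite{FPS01}.
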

\begin{proof} A theorem of Friedlander, Pomerance, and Shparlinski \cite[Theorem 5]{FPS01} asserts that for all large $x$ and for $\Delta \geq (\log\log{x})^3$, the number of $n \leq x$ with
\[ \lambda(n) \leq n \exp(-\Delta) \]
is at most
\begin{equation}\label{eq:FPS} x/\exp(0.69 (\Delta \log \Delta)^{1/3}). \end{equation}
If $n > x^{2/3}$ but $\lambda(n) < n^{1/2}$, then $\lambda(n) < n \exp(-\Delta)$ for $\Delta := \frac{1}{3}\log{x}$. Using this in \eqref{eq:FPS}, we get that the number of $n \leq x$ with $\lambda(n) < n^{1/2}$ is eventually bounded by
\[ x^{2/3}+ x/\exp(0.69 (\Delta \log \Delta)^{1/3}) < x/\exp((\log{x})^{1/3}). \qedhere \]
\end{proof}

\begin{lem}\label{lem:phidivupper} Let $a(n)$ be any of the functions $\phi(n)$, $\sigma(n)$, or $\lambda(n)$. Let $d$ be a positive integer, and let $\ell:= \Omega(d)$. For each $x \geq 1$, the number of $n \leq x$ where $d \mid a(n)$ is at most
\begin{equation}\label{eq:divbound} \frac{x}{d} (8\ell \log^2{x})^{\ell}.  \end{equation}
\end{lem}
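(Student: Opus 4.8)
The plan is to treat the three functions on the same footing through two structural facts, and then to reduce the count to a sum over ``certificates'' built from the prime factorization of $n$.

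First I would record the pointwise input coming from each prime power. For a prime power $p^b$ one has the uniform bound $a(p^b) \le 2 p^b$ in all three cases (indeed $\phi(p^b),\lambda(p^b) \le p^b$, while $\sigma(p^b) < 2p^b$), so that $a$ is comparable to the identity on prime powers. Moreover $\phi$ and $\sigma$ are multiplicative, whereas $\lambda(n) = \lcm_{p^b \| n}\lambda(p^b)$; hence for any prime $q$ the valuation $v_q(a(n))$ equals $\sum_{p^b\|n} v_q(a(p^b))$ (for $\phi,\sigma$) or $\max_{p^b\|n}v_q(a(p^b))$ (for $\lambda$). The second structural fact is a description of which prime powers $p^b$ satisfy $q \mid a(p^b)$: for $\phi$ and $\lambda$ this forces $p \equiv 1 \pmod{q}$ (or the degenerate $p=q$ with $b\ge 2$), confining $p$ to a single residue class mod $q$; for $\sigma$ it forces $\mathrm{ord}_q(p) \mid b+1$, confining $p$ to at most $b+1$ residue classes mod $q$.

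Next, given $n \le x$ with $d \mid a(n)$, I would extract a divisor $m \mid n$ that already ``explains'' the divisibility. Writing $d=\prod_q q^{e_q}$, for each $q$ I select prime powers $p^{v_p(n)}\| n$ whose $q$-contributions sum (respectively, max) to at least $e_q$; by the valuation formulas this can be arranged with at most $e_q$ distinct primes for each $q$, hence with at most $\ell=\sum_q e_q$ primes in total. Taking $m$ to be the product of the selected prime powers gives $m \mid n$, $\omega(m)\le\ell$, and $d \mid a(m)$. The size bound $a(m)\le 2^{\omega(m)}m \le 2^{\ell}m$ then forces $m \ge d/2^{\ell}$, so that each certificate is large. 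Finally I would count: since every admissible $n$ produces such an $m$, one has $\#\{n\le x: d\mid a(n)\}\le \sum_{m} x/m$, the sum taken over admissible certificates. Organizing this sum by recording which prime factor of $m$ is responsible for each of the $\ell$ prime factors (with multiplicity) of $d$, the contribution of a fixed assignment factors across the $\ell$ slots; each slot contributes $\sum_{p^b}1/p^b$ over the prime powers serving the relevant $q$, which by the residue-class description, the crude estimate $\sum_{\substack{n\le x\\ n\equiv c\,(q)}}1/n \le (2\log x)/q$, and an extra factor of $\log x$ absorbing the sum over $b$ (and, for $\sigma$, the several admissible residue classes), is $\ll (\log^2 x)/q$. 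Multiplying the $\ell$ slots produces $(\log^2 x)^\ell/d$, and folding in the combinatorial factor for the assignments together with the absolute constants yields the stated $\frac{x}{d}(8\ell\log^2 x)^\ell$.

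The main obstacle is the bookkeeping in this last step: matching the size bound $m\ge d/2^\ell$ against the residue-class sums so that large certificates are genuinely suppressed, while correctly handling repeated primes (a single prime of $n$ may account for several prime factors of $d$, so one must not confuse the product of the chosen prime powers with their least common multiple), the honest prime-power contributions with $b\ge 2$ and the degenerate case $p=q$, and --- most delicately --- the $\sigma$ case, where $q\mid\sigma(p^b)$ is governed by the order condition $\mathrm{ord}_q(p)\mid b+1$ rather than a single congruence $p\equiv 1\pmod{q}$. Pinning down the absolute constant so that it lands at exactly $8\ell$ is then a matter of being slightly generous at each of these steps.
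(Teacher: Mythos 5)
Your skeleton is the same as the paper's: extract from the factorization of $n$ a small set of prime-power ``certificates'' that witness $d \mid a(n)$, count the $n$ divisible by the certificate product $m$ via $x/m$, and pay a combinatorial factor (at most $\ell^{\ell}$) for assigning the $\ell$ prime factors of $d$, with multiplicity, to certificate primes. The genuine gap is in your final counting step, and it is more than bookkeeping. First, the claim that ``the contribution of a fixed assignment factors across the $\ell$ slots'' fails precisely when several slots are served by the same prime power $p^b \parallel n$ (e.g.\ two copies of $q$ with $q^2 \mid a(p^b)$): there is then only one summation variable, carrying one weight $1/p^b$, so the sum factors only over the $k \le \ell$ \emph{distinct} certificate primes, with the group condition $d_i \mid a(p^b)$ for a possibly composite or prime-power $d_i$. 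If you relax each group to its per-slot congruences mod $q$, you recover only the radical of $d$ in the denominator, not $d$; to keep $1/d_i$ along your route you would need residue-class counting to moduli $q^e$ --- for $\sigma$ this means counting roots of $\sigma(X^b) \equiv 0 \pmod*{q^e}$, a lemma you do not supply (note a degree-$b$ polynomial mod $q^e$ need not have at most $b$ roots; one must pass through $X^{b+1}-1$ and the cyclic structure of $(\Z/q^e\Z)^{\times}$, with $q=2$ exceptional). Second, even for a single slot the asserted bound $\ll (\log^2 x)/q$ does not follow from the congruence estimate alone: an admissible class mod $q$ can contain a very small prime (e.g.\ $p=2$ with $q \mid 2^{b+1}-1 = \sigma(2^b)$), making the class sum of $1/p^b$ of constant size unless you also use that $q \mid a(p^b)$ forces $p^b \ge a(p^b)/2 \ge q/2$. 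You invoke this size mechanism only in the aggregated form $m \ge d/2^{\ell}$, which is too weak to rescue the individual slot sums.

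The paper repairs --- indeed obviates --- all of this with one device your proposal is missing: group the certificate prime powers by their \emph{value} $m := a(p^e)$. For fixed $e$, the map $p \mapsto a(p^e)$ is strictly increasing in $p$, so each value $m$ arises from at most one prime; only $O(\log x)$ exponents $e$ can occur with $a(p^e) \le 2x$; and $1/p^e \le 2/a(p^e) = 2/m$. This converts each group sum into $2\sum_{m \le 2x,\, d_i \mid m} \frac{1}{m}\,\#\{p^e : a(p^e)=m\} \le \frac{8\log^2 x}{d_i}$, with no congruence information at all, with composite $d_i$ handled for free at the correct weight $1/d_i$, and with your order-condition difficulties for $\sigma$ and the degenerate case $p=q$ never arising. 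The paper also disposes of $\lambda$ in one line via $\lambda(n) \mid \phi(n)$ (so $d \mid \lambda(n)$ implies $d \mid \phi(n)$), avoiding your $\lcm$/max-of-valuations analysis entirely. With this value-grouping lemma substituted for your residue-class program, your certificate framework does close up; as written, the last step does not produce the denominator $d$.
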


\begin{proof} Since $\lambda(n) \mid \phi(n)$ for all $n$, we can (and do) assume that $a(n)$ is one of $\phi(n)$ or $\sigma(n)$. Suppose now that $d \mid a(n)$, where the prime factorization of $n$ is $p_1^{e_1} \cdots p_r^{e_r}$. Since $d \mid \prod_{i=1}^{r} a(p_i^{e_i})$, we can write $d = d_1 d_2 \cdots d_{r}$ where each $d_i \mid a(p_i^{e_i})$, for $i=1, 2, \dots, r$. By reordering if necessary, we can assume that $d_i > 1$ precisely for $i=1, 2, \dots, k$, say.  Given the factorization $d=d_1 d_2 \cdots d_k$ and the prime powers $p_1^{e_1}, \dots, p_k^{e_k}$, the number of corresponding $n=p_1^{e_1}\cdots p_r^{e_r}\le x$ is at most
\[ \frac{x}{p_1^{e_1} \cdots p_k^{e_k}}. \]
Keeping the $d_i$ fixed, we sum over the possible choices for the prime powers $p_i^{e_i}$. Since $p_i^{e_i} \leq n \leq x$, each $a(p_i^{e_i}) \leq \sigma(p_i^{e_i}) = 1 + p_i + \dots + p_i^{e_i} < p_i^{e_i} (1+1/p + 1/p^2 + \dots) \leq 2p_i^{e_i}\le 2x$. So we get an upper bound of
\begin{equation}\label{eq:UB} x \prod_{i=1}^{k}\sum_{\substack{p_i^{e_i}:~a(p_i^{e_i}) \leq 2x \\ d_i \mid a(p_i^{e_i})}}\frac{1}{p_i^{e_i}}. \end{equation}
Turning to the inner sum, we observe that
\begin{equation}\label{eq:innersum} \sum_{\substack{p_i^{e_i}:~a(p_i^{e_i}) \leq 2x \\ d_i \mid a(p_i^{e_i})}}\frac{1}{p_i^{e_i}} =  \sum_{\substack{m \leq 2x \\ d_i \mid m}} \sum_{a(p_i^{e_i})=m} \frac{1}{p_i^{e_i}} \leq 2\sum_{\substack{m \leq 2x \\ d_i\mid m}}\frac{1}{m} \sum_{a(p_i^{e_i})=m} 1.\end{equation}
Now for each fixed $e$, the expression $a(p^e)$ is a strictly increasing function of the prime variable $p$, and so there is at most one value of $p$ with $a(p^e)=m$. Moreover, if $e > 2+\frac{\ln{x}}{\ln{2}}$, then $p^e > 4x$ for each prime $p$, and so $a(p^e) > 2x \geq m$. Hence, the rightmost inner sum in \eqref{eq:innersum} is at most $2+\frac{\ln{x}}{\ln{2}} < 4\log{x}$, say. Also,
\[ \sum_{\substack{m \leq 2x \\ d_i\mid m}}\frac{1}{m} \leq \frac{1}{d_i}\sum_{m' \leq x}\frac{1}{m'} \leq \frac{1}{d_i}(1+\ln{x}) \leq \frac{2\log{x}}{d_i}. \]
Collecting these estimates, we see that the first expression in \eqref{eq:innersum} is at most $\frac{8\log^2{x}}{d_i}$. Putting this back into \eqref{eq:UB}, our upper bound does not exceed
\begin{equation}\label{eq:UB2} x \prod_{i=1}^{k}\frac{8\log^2{x}}{d_i} = \frac{x}{d} (8 \log^2{x})^k \leq \frac{x}{d} (8\log^2{x})^{\ell}; \end{equation}
here the final inequality uses that $k \leq \sum_{i=1}^{k} \Omega(d_i) = \Omega(d) = \ell$. Finally, we sum over the number of possibilities for the (unordered) factorization $d_1 \cdots d_k$ of $d$; this is crudely bounded above by $\ell^{\ell}$. Inserting this factor into \eqref{eq:UB2} gives the bound \eqref{eq:divbound}.
\end{proof}
\begin{rmk} This argument is based on the proof of \cite[Lemma 3.6]{PT13}. Lemma \ref{lem:phidivupper} might also be compared with \cite[Lemma 2]{BKW99} and \cite[Lemma 2.1]{LP11}. \end{rmk}

\begin{lem}\label{lem:toomanyprimes} Let $a(n)$ be any of the functions $\phi(n)$, $\sigma(n)$, or $\lambda(n)$. Take any integer $K \geq 1$. For $x\geq 1$, the number of positive integers $n \leq x$ with
\[ \Omega(a(n)) > K^2 \]
is
\begin{equation}\label{eq:ourupper} \ll \frac{K}{2^K} x (\log{x})^3. \end{equation}
\end{lem}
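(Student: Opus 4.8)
The plan is to bound the count by an exponential moment of $\Omega(a(n))$ and to lean on the multiplicativity of $\phi$ and $\sigma$. Since $\lambda(n)\mid\phi(n)$ for all $n$, we have $\Omega(\lambda(n))\le\Omega(\phi(n))$, so the case $a=\lambda$ is subsumed by the case $a=\phi$; thus we may take $a\in\{\phi,\sigma\}$, and in particular $a$ multiplicative. It is worth noting at the outset that the asserted bound has content only in a narrow range of $K$. On the one hand, $a(n)\le 2x$ for $n\le x$ (as in the proof of Lemma~\ref{lem:phidivupper}), so $\Omega(a(n))\le\ln(2x)/\ln 2$ and the set in question is \emph{empty} once $K^2>\ln(2x)/\ln 2$. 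On the other hand, $\frac{K}{2^K}x(\log x)^3\ge x$ as soon as $K\ll\log_2{x}$, in which range the bound is trivial. Hence we may assume throughout that $\log_2{x}\ll K\ll\sqrt{\log x}$; in particular $K$ is large.

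Set $y:=2^{1/K}\in(1,2]$, chosen so that $y^{-K^2}=2^{-K}$. Markov's inequality gives
\[ \#\{n\le x:\Omega(a(n))>K^2\}\le y^{-K^2}\sum_{n\le x}y^{\Omega(a(n))}=2^{-K}\sum_{n\le x}y^{\Omega(a(n))}. \]
The arithmetic function $n\mapsto y^{\Omega(a(n))}$ is nonnegative and multiplicative, and for $\phi$ and $\sigma$ its value on a prime $p$ is $y^{\Omega(p-1)}$ and $y^{\Omega(p+1)}$ respectively. The mean value of such a function is controlled by the behaviour of $y^{\Omega(a(p))}$ on average over primes, so the engine of the whole argument is the shifted-prime sum
\[ S:=\sum_{p\le x}\frac{y^{\Omega(p\mp 1)}}{p}\qquad(\text{$-$ for }\phi,\ \text{$+$ for }\sigma). \]
To estimate $S$, write $y^{\Omega(m)}=\sum_{d\mid m}h(d)$, where $h$ is the multiplicative function with $h(p^k)=y^{k-1}(y-1)$, and invert the order of summation to obtain $S=\sum_{d\le x}h(d)\sum_{p\le x,\,p\equiv\pm1\,(d)}1/p$. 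Bounding the inner sum by the Brun--Titchmarsh inequality together with partial summation, and then summing $\sum_{d}h(d)/\phi(d)\ll\exp\big((y-1)\sum_{p\le x}1/p\big)\ll(\log x)^{y-1}$, one arrives at a bound of the shape $S\ll(\log x)^{y-1}\log_2{x}$. In the range $K\gg\log_2{x}$ we have $y-1\ll 1/\log_2{x}$, so $(\log x)^{y-1}=\exp((y-1)\log_2{x})=O(1)$ and therefore $S\ll\log_2{x}$.

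Feeding this into a mean-value bound for multiplicative functions then gives $\sum_{n\le x}y^{\Omega(a(n))}\ll x(\log x)^{O(1)}$, and hence
\[ \#\{n\le x:\Omega(a(n))>K^2\}\ll 2^{-K}x(\log x)^{O(1)}\le\frac{K}{2^K}x(\log x)^3, \]
the last step using that $K(\log x)^3$ absorbs the fixed power of $\log x$. (The exponent $3$ is not essential: any fixed power suffices, and one tunes the constants in $S$ to reach it.)

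The main obstacle is the passage from the shifted-prime sum $S$ to the moment $\sum_{n\le x}y^{\Omega(a(n))}$. The subtlety is that $y^{\Omega(a(n))}$ is \emph{not} bounded on primes --- indeed $y^{\Omega(p-1)}$ can be as large as $p^{1/K}$ for the rare primes $p$ with $p-1$ almost a power of $2$ --- so Shiu's theorem and a naive Rankin bound do not apply directly. One must first discard the negligibly few $n$ whose squarefull part is large (these account for the term $\Omega(n)-\omega(n)$ in $\Omega(\phi(n))$, and number $O(x\,c^{-K^2})$ for some $c>1$), and then control the remaining multiplicative sum by a Shiu/Nair--Tenenbaum-type mean-value estimate, or by Rankin's method after a suitable truncation of the extreme values. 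Establishing $S\ll(\log x)^{y-1}$ uniformly in $y\in(1,2]$ --- in particular handling the moduli $d$ near $x$, for which Brun--Titchmarsh is weak --- is the other delicate point; the remaining reductions (the case $a=\lambda$, the contribution of the higher prime powers $p^k$ with $k\ge 2$, and the symmetric treatment of $p+1$ for $\sigma$) are routine.
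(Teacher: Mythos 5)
Your Rankin/moment strategy is a genuinely different plan from the paper's, but as written it is a sketch whose two decisive steps are missing --- and you say so yourself in the final paragraph. For comparison, the paper's proof is short and elementary: since $\Omega(a(n)) = \sum_{p^e \parallel n} \Omega(a(p^e))$, the condition $\Omega(a(n)) > K^2$ forces (by pigeonhole) either $\omega(n) > K$ or the existence of a prime power $p^e \parallel n$ with $\Omega(a(p^e)) > K$. The first case is counted directly by the Hall--Tenenbaum bound $\#\{m \leq t : \Omega(m) \geq K\} \ll \frac{K}{2^K} t \log t$, and the second by summing $x/p^e \leq 2x/a(p^e)$ over such prime powers, switching to the variable $m = a(p^e)$, using the observation from the proof of Lemma \ref{lem:phidivupper} that each $m$ has at most $4\log x$ preimages $p^e$, and finishing with partial summation against the same Hall--Tenenbaum estimate. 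No moments, no Brun--Titchmarsh, no mean-value theorems for multiplicative functions.

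The concrete gap in your proposal is the estimate $S \ll (\log x)^{y-1}\log_2 x$, through which everything funnels. The moduli $d$ near $x$ are not a routine nuisance: for $d > \sqrt{x}$ the trivial bound (at most $x/d + 1$ primes in the progression, each exceeding $d$) leaves a tail $\sum_{d > \sqrt{x}} h(d)(x/d^2 + 1/d)$ whose natural evaluation is a positive power of $x$, because for a prime with $p \mp 1$ having abnormally many prime factors the mass $\sum_{d \mid p \mp 1,\, d > \sqrt{x}} h(d)$ can be essentially all of $y^{\Omega(p \mp 1)}$ --- precisely the primes responsible for the unboundedness you flag. The natural repair is to split off the primes with $\Omega(p \mp 1) > T$ and count them dyadically via $\#\{m \leq t : \Omega(m) \geq j\} \ll \frac{j}{2^j} t \log t$; but that is exactly the estimate \eqref{eq:T} driving the paper's proof, so closing your gap essentially imports the paper's key input anyway. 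By contrast, the mean-value passage you single out as the main obstacle is in fact the easy part: for nonnegative multiplicative $f$ one has unconditionally $\sum_{n \leq x} f(n) \leq x \prod_{p \leq x}\bigl(1 + \sum_{k \geq 1} f(p^k) p^{-k}\bigr)$, and the local factors here converge since $y^{\Omega(p \mp 1)} \leq p^{O(1/K)} < p$, so no Shiu or Nair--Tenenbaum theorem, and no excision of large squarefull parts, is needed. Two smaller quantitative slips: the opening reduction is trivial only for $K \leq \frac{3}{\ln 2}\log_2 x\,(1 + o(1))$, not for all $K \ll \log_2 x$ (state it instead as: we may assume $2^K > K(\log x)^3$); and the final absorption $2^{-K} x (\log x)^{C} \leq \frac{K}{2^K} x (\log x)^3$ requires $(\log x)^{C-3} \leq K$, which fails at the bottom of your $K$-range whenever $C > 3$ --- fixable by retuning $y = 2^{(1+\epsilon)/K}$, but not automatic as written.
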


\begin{proof} Again, we may assume that $a(n)$ is either $\phi(n)$ or $\sigma(n)$. We begin by recalling Lemma 13 of \cite{LP07} (due to Hall and Tenenbaum), asserting that
\begin{equation}\label{eq:T} \sum_{\substack{m \leq t \\ \Omega(m)\geq K}} 1 \ll \frac{K}{2^K} t\log{t}, \end{equation}
uniformly for real $t\geq 1$ and positive integers $K$. Since $\Omega(a(n)) = \sum_{p^e\parallel n} \Omega(a(p^e))$, then if $\Omega(a(n)) > K^2$, we have either
\begin{enumerate}
\item $\Omega(n) \geq \omega(n) > K$, \emph{or}
\item  there is a prime power $p^e \parallel n$ with $\Omega(a(p^e)) > K$.
\end{enumerate}  From \eqref{eq:T} with $t=x$, the the number of $n\leq x$ where (i) holds is $O(\frac{K}{2^K} x\log{x})$, which is acceptable. Now the number of $n \leq x$ where (ii) holds is at most
\begin{align} \notag x\sum_{\substack{p^e \leq x \\ \Omega(a(p^e)) > K}} \frac{1}{p^e} &\leq 2x \sum_{\substack{p^e \leq x\\ \Omega(a(p^e)) >K}}\frac{1}{a(p^e)} \\ &\leq 2x \sum_{\substack{m \leq 2x \\ \Omega(m) > K}}\frac{1}{m} \sum_{a(p^e)=m} 1 \notag\\ &\leq 8x\log{x} \sum_{\substack{m \leq 2x \\ \Omega(m) > K}}\frac{1}{m}; \label{eq:primepowercase}\end{align}
in moving from the second line to the third, we have used that the number of prime powers $p^e$ with $a(p^e)=m$ is bounded by $4\log{x}$, exactly as in the proof of Lemma \ref{lem:phidivupper}. To estimate the remaining sum, we use \eqref{eq:T} along with partial summation:
\begin{align*} \sum_{\substack{m \leq 2x \\ \Omega(m) > K}}\frac{1}{m} &= \int_{1}^{2x} \frac{1}{t}\,\d{\left(\sum_{\substack{m\leq t\\ \Omega(m)> K}} 1\right)} \\&\ll \frac{K}{2^K} \left(\log{(2x)} + \int_{1}^{2x} \frac{\log{t}}{t}\, \d{t}\right) \ll \frac{K}{2^K}(\log{x})^2.\end{align*}
Inserting this back into \eqref{eq:primepowercase} yields the claimed upper bound \eqref{eq:ourupper}.
\end{proof}

\section{Proof of Theorem \ref{thm:main}}\label{sec:proofs}

We need one more definition.

\begin{df} If $\E \subset \N$ is a set of positive integers, we say that $\E$ is \emph{thin} if there is a $\theta > 0$ and a positive number $x_0$ so that whenever $x > x_0$,
	\[ \#\E \cap [1,x] < x/\exp((\log{x})^{\theta}). \]
\end{df}

Note that every meager set is thin and that every thin set is of density zero. Our key lemma is the following:

\begin{lem}\label{lem:smallpreimage} Let $\E$ be a thin set of positive integers. Let $a(n)$ be any of the functions $\phi(n)$, $\sigma(n)$, or $\lambda(n)$. Then $a^{-1}(\E)$ is also a thin set.
\end{lem}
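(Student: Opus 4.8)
The plan is to show that, after discarding a thin exceptional set, every $n \le x$ with $a(n) \in \E$ is detected by the divisor bound of Lemma \ref{lem:phidivupper}, and that the thinness of $\E$ forces the resulting count to be small. First I would dispose of two auxiliary bad sets. Since $\sigma(n) \ge n$ and, by Lemma \ref{lem:minimalorder}, $\phi(n) \gg n/\log_2{n}$, we have $a(n) \ge n^{1/2}$ for all large $n$ when $a \in \{\phi,\sigma\}$; for $a = \lambda$, the $n \le x$ violating $\lambda(n) \ge n^{1/2}$ number at most $x/\exp((\log{x})^{1/3})$ by Lemma \ref{lem:FPS}, which is thin. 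Second, fixing a parameter $K = K(x)$ to be chosen below, Lemma \ref{lem:toomanyprimes} shows that the $n \le x$ with $\Omega(a(n)) > K^2$ form a set of size $\ll \frac{K}{2^K} x(\log{x})^3$. It then remains to bound the number of \emph{typical} $n \le x$ --- those with $a(n) \ge n^{1/2}$ and $\Omega(a(n)) \le K^2$ --- for which $a(n) \in \E$.

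For a typical $n$, write $v = a(n)$, so that $v \in \E$, $\Omega(v) \le K^2$, and $n \le v^2$; moreover $v \le Y$, where $Y = x$ for $a \in\{\phi,\lambda\}$ and $Y \ll x\log_2{x}$ for $a = \sigma$ (again by Lemma \ref{lem:minimalorder}). Grouping the typical $n$ according to the value $v = a(n)$ and applying Lemma \ref{lem:phidivupper} with $d = v$ and truncation point $\min(x,v^2)$, each $v$ contributes at most $\frac{\min(x,v^2)}{v}\, B$ preimages, where $B := (8K^2\log^2{x})^{K^2}$ (using $\Omega(v)\le K^2$ and the monotonicity of the bound). Summing over $v \in \E \cap [1,Y]$ and splitting the range at $v = \sqrt{x}$ gives the bound
\[ B\Bigl(\sum_{\substack{v\in\E\\ v\le\sqrt x}} v + x\sum_{\substack{v\in\E\\ \sqrt x<v\le Y}}\frac1v\Bigr). \]
Using $\#\E\cap[1,t] < t/\exp((\log{t})^\theta)$, the first sum is $\le \sqrt x\cdot\#\E\cap[1,\sqrt x] \ll x\exp(-2^{-\theta}(\log{x})^\theta)$, while partial summation bounds the second sum by $\int_{\frac12\log{x}}^{\infty} e^{-u^\theta}\,\d u \ll_\theta \exp(-c(\log{x})^\theta)$. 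Hence the typical contribution is at most $B\cdot x\exp(-c'(\log{x})^\theta)$ for some $c'>0$.

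The crux is the choice of $K$, and this is the step I expect to be the only real obstacle. The factor $B=\exp\bigl(K^2\cdot O(\log_2{x})\bigr)$ coming from the divisor bound must not swallow the savings $\exp(-c'(\log{x})^\theta)$ furnished by the thinness of $\E$, which forces $K^2\log_2{x} = o((\log{x})^\theta)$; at the same time $K$ must be large enough that $\frac{K}{2^K}x(\log{x})^3$ is thin, which needs $K$ to exceed a fixed power of $\log{x}$. Taking $K=\lfloor(\log{x})^{\theta/3}\rfloor$ threads this needle: then $B=\exp(o((\log{x})^\theta))$, so the typical contribution is at most $x/\exp((\log{x})^{\theta/2})$, while the too-many-primes set is at most $x/\exp((\log{x})^{\theta/4})$, say. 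Combining the three pieces shows $\#(a^{-1}(\E)\cap[1,x]) < x/\exp((\log{x})^{\theta'})$ for a suitable $\theta'>0$ (something like $\min\{1/3,\theta/4\}$) and all large $x$, so that $a^{-1}(\E)$ is thin. Everything outside the parameter balancing is routine bookkeeping with the lemmas already in hand.
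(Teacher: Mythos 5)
Your proof is correct and takes essentially the same route as the paper's: Lemma \ref{lem:FPS} to discard the $n$ with $a(n)$ small relative to $n$, Lemma \ref{lem:toomanyprimes} with $K$ equal to a small power of $\log x$ to discard the $n$ with $\Omega(a(n))$ large, and Lemma \ref{lem:phidivupper} summed over the values $v \in \E$ against the thinness bound for everything else, with the same parameter-balancing considerations. The differences are only bookkeeping --- you partition the preimage while the paper partitions $\E$ itself (into $m \le x^{1/3}$, $m$ with many prime factors, and the rest), your small-value range is handled by truncating the divisor bound at $v^2$ rather than by the paper's $\E_1$ device, and your $K = \lfloor(\log x)^{\theta/3}\rfloor$ replaces the paper's $\lfloor(\log x)^{\theta/6}\rfloor$ --- and all of these choices check out.
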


\begin{proof} Fix $\theta > 0$ so that the number of elements of $\E$ not exceeding $t$ is bounded by $t/\exp((\log{t})^{\theta})$ for all sufficiently large values of $t$. For large real numbers $x$, let us estimate the number of $n \leq x$ with $a(n) \in \E$. We partition $\E$ into sets
\begin{enumerate}
\item $\E_1 = \{m \in \E: m \leq x^{1/3}\}$,
\item $\E_2 = \{m \in \E \setminus \E_1: \Omega(m) > (\log{x})^{\theta/3} \}$,
\item $\E_3 = \E \setminus (\E_1 \cup \E_2)$.
\end{enumerate}
If $a=\lambda$ and $a(n) \leq x^{1/3}$, then either $n \leq x^{2/3}$ or $\lambda(n) < n^{1/2}$; so by Lemma \ref{lem:FPS}, $a(n) \in \E_1$ for at most
\begin{equation}\label{eq:ainverse1} x/\exp((\log{x})^{1/4})\end{equation} values of $n\leq x$, once $x$ is large. This bound also holds for $a=\phi$ or $\sigma$; indeed, in these cases, one has the much stronger result that $a(n) > x^{1/3}$ whenever $n > x^{1/3}\log{x}$. Now suppose that $a(n) \in \E_2$. Taking $K = \lfloor (\log{x})^{\theta/6}\rfloor$ in Lemma \ref{lem:toomanyprimes}, we find after a brief computation that the number of such $n\leq x$ is at most
\begin{equation}\label{eq:ainverse2} x/\exp((\log{x})^{\theta/7}), \end{equation}
for large enough values of $x$. Finally, suppose that $a(n) = m$ for an $m \in \E_3$. Let $\ell:=\Omega(m)$. Since $m \mid a(n)$, Lemma \ref{lem:phidivupper} shows that the number of these $n$ is at most
\[ \frac{x}{m} (8\ell \log^2{x})^{\ell} \leq \frac{x}{m} \exp((\log{x})^{\theta/2}) \]
for large $x$. Here we have used that $\ell \leq (\log{x})^{\theta/3}$, since $m\in \E_3$. Summing over $m \in \E_3$ gives an upper bound on the total number of these $n$ that does not exceed
\begin{align*} x \exp((\log{x})^{\theta/2}) \cdot \sum_{\substack{m > x^{1/3} \\ m \in \E}} \frac{1}{m} &\leq x \exp((\log{x})^{\theta/2}) \cdot \int_{x^{1/3}}^{\infty} \frac{1}{t^2} \cdot \#\{m \in \E: m \leq t\} \,\d{t} \\
	& \leq x \exp((\log{x})^{\theta/2}) \cdot \int_{x^{1/3}}^{\infty} \frac{1}{t \exp((\log{t})^{\theta})} \,\d{t}.\end{align*}
The final integral is eventually smaller than $\exp(-(\log{x})^{9\theta/10})$, say, and thus the total number of $n \leq x$ with $a(n)\in \E_3$ is eventually smaller than
\begin{equation}\label{eq:ainverse3} x/\exp((\log{x})^{4\theta/5}), \end{equation}
for instance. Adding \eqref{eq:ainverse1}, \eqref{eq:ainverse2}, and \eqref{eq:ainverse3}, we see that the size of $a^{-1}(\E) \cap [1,x]$ is eventually smaller than $x/\exp((\log{x})^{\eta})$ for any fixed
\[ \eta < \min\left\{\frac{1}{4}, \frac{\theta}{7}, \frac{4\theta}{5}\right\}. \]
Hence, $a^{-1}(\E)$ is a thin set.
\end{proof}

\begin{lem}\label{lem:usuallylarge} Suppose that $f = f_1 \circ f_2 \circ \dots \circ f_j$, where each $f_i \in \{\phi, \sigma, \lambda\}$. Then the set of $n$ where
\[ f(n) < n^{\frac{1}{2^j}} \]
is a thin set.	
\end{lem}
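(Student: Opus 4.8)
The plan is to argue by induction on the number $j$ of factors, peeling off the \emph{innermost} function $f_j$ at each step. The guiding observation is that a single application of $\phi$, $\sigma$, or $\lambda$ costs us at most a square root, outside a thin set of exceptions, and that under composition these square-root losses multiply — which is exactly what produces the exponent $1/2^j$.

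For the base case $j=1$, I would show that for each $a \in \{\phi,\sigma,\lambda\}$ the set $\{n : a(n) < n^{1/2}\}$ is thin. When $a=\sigma$ this set is empty, since $\sigma(n)\geq n$ for all $n$. When $a=\phi$, Lemma \ref{lem:minimalorder} gives $\phi(n) \gg n/\log_2{n}$, so $\phi(n) \geq n^{1/2}$ for all large $n$ and the exceptional set is finite. When $a=\lambda$, Lemma \ref{lem:FPS} bounds the exceptional set by $x/\exp((\log{x})^{1/3})$, which is thin (with $\theta = 1/3$). In all three cases the set is thin.

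For the inductive step, write $f = h \circ f_j$, where $h = f_1 \circ \dots \circ f_{j-1}$ is a composition of $j-1$ functions. By the inductive hypothesis, the set $\E := \{m : h(m) < m^{1/2^{j-1}}\}$ is thin. I would then split the integers $n$ into three classes according to whether
\begin{enumerate}
\item $f_j(n) < n^{1/2}$;
\item $f_j(n) \geq n^{1/2}$ but $f_j(n) \in \E$; or
\item $f_j(n) \geq n^{1/2}$ and $f_j(n) \notin \E$.
\end{enumerate}
The set of $n$ satisfying (i) is thin by the base case applied to $f_j$, and the set satisfying (ii) is contained in $f_j^{-1}(\E)$, which is thin by Lemma \ref{lem:smallpreimage} (as $\E$ is thin and $f_j \in \{\phi,\sigma,\lambda\}$). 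For $n$ in case (iii), monotonicity of $t \mapsto t^{1/2^{j-1}}$ yields
\[ f(n) = h(f_j(n)) \geq f_j(n)^{1/2^{j-1}} \geq \left(n^{1/2}\right)^{1/2^{j-1}} = n^{1/2^j}, \]
so such $n$ do not belong to the exceptional set at all.

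Hence the set where $f(n) < n^{1/2^j}$ is contained in the union of the thin sets coming from (i) and (ii). Since a finite union of thin sets is again thin — the sum $x/\exp((\log{x})^{\theta_1}) + x/\exp((\log{x})^{\theta_2})$ is eventually dominated by $x/\exp((\log{x})^{\theta})$ for any $\theta < \min\{\theta_1,\theta_2\}$ — this completes the induction. I do not anticipate any serious obstacle, since all the analytic content is already packaged into Lemmas \ref{lem:minimalorder}, \ref{lem:FPS}, and \ref{lem:smallpreimage}; the only points demanding care are the bookkeeping of the induction and the chain of inequalities in case (iii), whose exponents are arranged precisely so that the square-root loss from $f_j$ composes with the $1/2^{j-1}$ from the inductive hypothesis to give $1/2^j$.
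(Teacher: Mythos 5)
Your proposal is correct and is essentially the paper's own argument in inductive dress: the paper's direct choice of the minimal index $i$ with $h_i(n) < n^{1/2^{i+1}}$, which localizes the square-root loss to a single application of $\phi$ or $\lambda$, is exactly your induction unrolled, since $h_{i-1}^{-1} = f_j^{-1}\circ \cdots \circ f_{j-i+1}^{-1}$ reproduces your iterated pullbacks. Both versions rest on the same ingredients --- $\sigma(n)\ge n$, finiteness of $\{n : \phi(n) < n^{1/2}\}$, Lemma \ref{lem:FPS} for $\lambda$, Lemma \ref{lem:smallpreimage} for pulling back thin sets, and closure of thinness under finite unions --- so there is no gap and no substantive difference.
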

\begin{proof} Let $\E$ be the set of $n$ where either $\phi(n) < n^{1/2}$ or $\lambda(n) < n^{1/2}$. There are only finitely many $n$ with $\phi(n) < n^{1/2}$, which together with Lemma \ref{lem:FPS} implies that $\E$ is a thin set.

Suppose now that $f(n) < n^{1/2^j}$. For each $0 \leq i < j$, put $h_i := f_{j-i} \circ f_{j-i+1} \circ \dots \circ f_j$. There is at least one index $i$ with $0 \leq i < j$ having $h_i(n) < n^{1/2^{i+1}}$, namely $i=j-1$. Now select the smallest such $i$. If $i=0$, then $f_j(n) < n^{1/2}$; so since $\sigma(n) \ge n$, we have $f_j \in \{\phi, \lambda\}$ and $n$ belongs to the thin set $\E$. If $1 \leq i < j$, set  $m:=h_{i-1}(n)$. Then $m \geq n^{1/2^i}$, while \[ f_{j-i}(m) = h_i(n)  < n^{1/2^{i+1}} \leq m^{1/2}.\] Thus, again since $\sigma(m)\ge m$, we have $f_{j-i} \in \{\phi,\lambda\}$ and $m \in \E$. Hence, $n \in h_{i-1}^{-1}(\E)$. Applying Lemma \ref{lem:smallpreimage} repeatedly, we see that $h_{i-1}^{-1}(\E)$ is a thin set.  We have shown that any solution to the inequality $f(n) < n^{1/2^j}$ belongs to
\[ \E \cup \bigcup_i h_{i-1}^{-1}(\E) , \]
where $i$ runs over all values $1\le i < j$ for which $f_{j-i}\in\{\phi, \lambda\}$. This is a finite union of thin sets and hence thin itself.
\end{proof}

\begin{proof}[Proof of Theorem \ref{thm:main}] Suppose that $f = f_1 \circ f_2 \circ \dots \circ f_j$, where each $f_i \in \{\phi, \sigma, \lambda\}$. Since $\lambda$ and $\phi$ map their inputs to smaller values, the maximal order of the sum-of-divisors function (see Lemma \ref{lem:minimalorder}) yields $f(n) \leq 2^j n (\log_2{n})^j$ for all large values of $n$. So for large enough $n$,
\[ \log f(n) \leq 2\log{n}. \]
Thus, the growth condition \eqref{eq:growthcond2} holds for $f(n)$. We turn now to the other growth condition \eqref{eq:growthcond1}. From Lemma \ref{lem:usuallylarge}, the set of $n$ with $f(n) < n^{1/2^j}$ is a thin set. Consequently,
\begin{align} \notag\sum_{n \leq x}\log{f(n)} &\geq \sum_{n \leq x} \ln f(n) \geq \frac{1}{2^j} \sum_{\substack{n \leq x\\ f(n) \geq n^{1/2^j}}}\ln{n} \\&\geq \frac{1}{2^j} \Bigg(x\log{x}+O(x) + O\bigg(\log{x} \sum_{\substack{n \leq x \\ f(n) < n^{1/2^j}}}1\bigg)\Bigg) = \left(\frac{1}{2^j}+o(1)\right)x \log{x},\label{eq:lowerboundcomputation}  \end{align}
as $x\to\infty$, verifying \eqref{eq:growthcond1} for $f(n)$. Repeated application of Lemma \ref{lem:smallpreimage} shows that the preimage of a thin set under $f$ is thin. In particular, the preimage of a meager set is thin, and so of density zero. Hence, Theorem \ref{thm:genCE} applies to $f(n)$, and the number \eqref{eq:main1} is normal.

Now consider the function of $n$ given by $f(p_n)$, where $p_n$ is the $n$th prime. We view this as the composition $f\circ \iota$, where $\iota(n) = p_n$. By the prime number theorem, $p_n \sim n\log{n}$ as $n\to\infty$, and so from the preceding paragraph,
\[ \log f(p_n) \leq 2\log{p_n} < 3\log{n} \]
for all large $n$, which shows that $f\circ \iota$  satisfies condition \eqref{eq:growthcond2}. The relation $p_n \sim n\log{n}$ also implies that the preimage of a thin set under $\iota$ is  a thin set. Since $f(n) \geq n^{1/2^j}$ except on a thin set, we see that
\[ f(p_n) > p_n^{1/2^j} > n^{1/2^j} \]
for all $n$ outside of a thin set. Mimicking the lower bound computation \eqref{eq:lowerboundcomputation}, we deduce that $f \circ \iota$ also satisfies \eqref{eq:growthcond1}. It remains to show that meager sets have density zero preimages. If $\E$ is a meager set, we have already seen that $f^{-1}(\E)$ is thin; hence, $(f\circ \iota)^{-1}(\E) = \iota^{-1}(f^{-1}(\E))$ is also thin, and in particular of density zero. This completes the proof of applicability of Theorem \ref{thm:genCE} and also the proof of normality of \eqref{eq:main2}.
\end{proof}

\begin{rmk} One can modify this argument to prove the following generalization of Theorem \ref{thm:main}, very much in the spirit of Theorem A: \emph{Let $f$ be any composition of $\phi$, $\sigma$, and $\lambda$. Suppose that $\Ss$ is a set of natural numbers with the property that
\[ \#\Ss \cap [1,x] > \frac{x}{(\log{x})^B} \]
for a certain  constant $B$ and all large enough $x$. List the elements of $\Ss$ as $s_1 < s_2 < s_3 < \dots$. Then the real number $0.\widebar{f(s_1)} \widebar{f(s_2)} \widebar{f(s_3)} \ldots$ is $g$-normal.}
\end{rmk}

\section{Concluding thoughts}\label{sec:concluding}
Theorem \ref{thm:main} is only one of several possible applications of Theorem \ref{thm:genCE}. Here we report on some other families of $g$-normal numbers that can be produced either directly from Theorem \ref{thm:genCE} or by following its proof.

\subsection{Multiplicative functions whose values divide their arguments} Suppose that $f\colon\N\to\N$ is a multiplicative function having the property that $f(n)\mid n$ for each natural number $n$. For example, $f(n)$ might be the radical of $n$ (that is, $\prod_{p \mid n}p$) or the largest divisor of $n$ expressible as a sum of two squares. With  \begin{equation}\label{eq:lotsofgood}\G := \{p: f(p)=p\}, \quad\text{suppose that}\quad \sum_{\substack{p \leq t \\ p \in \G}} \frac{\log{p}}{p} \to \infty \quad \text{as $t\to\infty$}.  \end{equation}
We claim that $0.\widebar{f(1)}\widebar{f(2)} \widebar{f(3)}\dots$ is $g$-normal.

Note that \eqref{eq:lotsofgood} holds even for reasonably sparse sets of primes $\G$; for instance, it is sufficient that $\#\G \cap [1,t] \gg t/(\log{t})^2$ for large $t$.

Let $N$ and $n$ have the same meaning as in the proof of Theorem \ref{thm:genCE}. Carefully reading that argument, we see it suffices to prove the following estimate for any meager set $\E$: As $N$ (and hence also $n$) tends to infinity,
\begin{equation}\label{eq:forany} \sum_{\substack{1\leq m \leq n\\ f(m) \in \E}} L(f(m)) = o\left(\sum_{m=1}^{n} L(f(m))\right). \end{equation}
In fact, we will show that the left-hand side is $O_{\E}(n)$ while the right-hand side exceeds any constant multiple of $n$ for large enough $n$.

To handle the left-hand side of \eqref{eq:forany}, we use that $f(m) \mid m$ to see that
\begin{align*} \sum_{\substack{1\leq m \leq n\\ f(m) \in \E}} L(f(m)) &\ll \sum_{\substack{1\leq m \leq n\\ f(m) \in \E}} \log f(m) \\&\leq \sum_{\ell \in \E} (\log \ell) \sum_{\substack{n \le m \\ \ell \mid n}} 1\leq n \sum_{\ell \in \E} \frac{\log{\ell}}{\ell}. \end{align*}
A straightforward exercise in partial summation shows that the sum of $\frac{\log{\ell}}{\ell}$ converges for $\ell$ in any meager set. So the left-hand side of \eqref{eq:forany} is $O_{\E}(n)$.

To estimate the right-hand side of \eqref{eq:forany}, we note that
\begin{align}\notag \sum_{m \leq n}L(f(m)) &\gg_{g} \sum_{m \leq n}\ln{f(m)} \\&\geq \sum_{\substack{m \leq n \\ m\text{ squarefree}}} \sum_{p \mid m} \ln f(p) \notag\\&= \sum_{p \le n} \ln f(p) \sum_{\substack{m \leq n \\ m \text{ squarefree} \\ p \mid m}} 1.\label{eq:fdivverify}
\end{align}
Consider the contribution to \eqref{eq:fdivverify} from primes $p \leq \sqrt{n}$. For $m \leq n$ to be squarefree and divisible by $p$, we need that $m=pm'$ for some squarefree $m' \leq n/p$ not divisible by $p$. The total number of squarefree $m' \leq n/p$ is $\sim \frac{6}{\pi^2}n/p$ as $n\to\infty$, while the total number of $m'\leq n/p$ that are multiples of $p$ is at most $n/p^2 \leq \frac{1}{2} n/p$. Since $\frac{6}{\pi^2} - \frac{1}{2} > \frac{1}{10}$ (say), the contribution to \eqref{eq:fdivverify} from primes $p \leq \sqrt{n}$ is eventually \[ \gg n \sum_{p \leq \sqrt{n}} \frac{\ln{f(p)}}{p} \gg n\sum_{\substack{p \leq \sqrt{n} \\ p \in \G}} \frac{\ln{p}}{p}; \]
but \eqref{eq:lotsofgood} shows that the final sum on $p$ tends to infinity with $n$.

%%%%%
\subsubsection{A non-normality result}
%%%%%

Suppose again that $f\colon\mathbb{N}\to \mathbb{N}$ is a multiplicative function having the property that $f(n) \mid n$ for each natural number $n$. However, in this case, suppose that $\G := \{p: f(p)=p\}$ is a finite set and moreover that if $p\not\in \G$ then $f(p^k)=1$. We claim that $0.\widebar{f(1)}\widebar{f(2)} \widebar{f(3)}\dots$ is \emph{not} $g$-normal in this case.

Let us assume by way of contradiction that $0.\widebar{f(1)}\widebar{f(2)} \widebar{f(3)}\dots$ is $g$-normal.

The length of the word $\widebar{f(1)}\widebar{f(2)} \dots \widebar{f(n)}$ now behaves rather differently, as $n\to\infty$. Recalling \eqref{eq:lengthbound}, we see that
\begin{align}\notag \sum_{1 \leq m \leq n} L(f(m)) &\ll \sum_{1 \leq m \leq n}\left(1 + \ln{f(m)}\right) \\
&= n + \sum_{1 \leq m \leq n} \sum_{\substack{p^k\parallel m \\ p\in \G }}\ln(p^k) \notag\\
&\leq n + n \sum_{\substack{p^k\\ p \in \G}} \frac{\ln{p^k}}{p^k} \ll n. \label{eq:nonnormalitygrowth}\end{align}

Let $M=\prod_{p\in \G} p$ and let $k$ be a large positive integer. The strings
\[
 \overline{f(1+j M^k)}~\overline{f(2+j M^k)}~\overline{f(3+j M^k)} \dots \overline{f(2^k-1 + j M^k)}, \qquad j \in \mathbb{N},
\]
are the same for all $j$, because $f(i) = f(i+j M^k)$ for $1 \le i \le 2^k-1$, unless there is some prime $p\in \G$ that divides $i$ and $i+jM^k$ to different powers, which is clearly impossible for any $j$. Therefore by \eqref{eq:nonnormalitygrowth}, the number of occurrences of $\widebar{f(1)} \widebar{f(2)} \widebar{f(3)} \dots \overline{f(2^k-1 )}$ in the first $n$ digits of  $0.\widebar{f(1)}\widebar{f(2)} \widebar{f(3)}\dots$ is $\gg_f n/M^k$ for sufficiently large $n$.

On the other hand, the string $ \widebar{f(1)} \widebar{f(2)} \widebar{f(3)} \dots \overline{f(2^k-1 )}$ contains at least $2^k-1$ digits, so by the normality assumption, it should appear in the first $n$ digits at most
\[
 \frac{n}{g^{2^k-1}} (1+o(1))
\]
times, as $n\to\infty$. Taking $k$ large enough gives the desired contradiction.

We note that this leaves open the question of normality when $\G$ is infinite but $\sum_{p\in \G} \log p/ p < \infty$.

\subsection{Orders} The function in this example is a close relative of $\phi$ and $\lambda$. For odd numbers $n$, let $\ell(n)$ denote the multiplicative order of $2$ mod $n$. We claim that $f(n)=\ell(2n-1)$ satisfies the hypotheses of Theorem \ref{thm:genCE}, and so $0. \widebar{\ell(1)} \widebar{\ell(3)} \widebar{\ell(5)} \widebar{\ell(7)}\dots$
is $g$-normal.

Let us quickly see why. It is known that for all but $o(x)$ integers $n \leq x$ (as $x\to\infty$), we have $\ell(2n-1) > x^{1/3}$;  see, for example, \cite[Theorem 17]{KR01}. This implies the growth condition \eqref{eq:growthcond1}, while \eqref{eq:growthcond2} follows from the trivial bound $\ell(2n-1)< 2n$. Next, we show that thin sets (and so also meager sets) have density zero preimages. Let $\E$ be a thin set, and choose $\theta > 0$ so that $\#\E \cap [1,t] \leq t/\exp((\log{t})^{\theta})$ for large enough $t$. Partition $\E$ into sets $\E_1$, $\E_2$, and $\E_3$ defined exactly as in the proof of Lemma \ref{lem:smallpreimage}. As remarked above, only $o(x)$ integers $n\leq x$ have $f(n) \in \E_1$. Suppose now that $f(n) \in \E_2$. Since $\ell(n)$ divides $\phi(2n-1)$, we see that $\Omega(\phi(2n-1)) > (\log{x})^{\theta/3}$, and reasoning as in the proof of Lemma \ref{lem:smallpreimage}, the number of these $n$ is eventually bounded by $x/\exp((\log{x})^{\theta/7})$. Finally, suppose $m \in \E_3$ and that $f(n)=m$. Using that $m \mid \phi(2n-1)$ and proceeding as in the proof of Lemma \ref{lem:smallpreimage}, we get that $2n-1$ is restricted to a set of size at most $x/\exp((\log{x})^{4\theta/5})$ for large $x$. Putting everything together, we see that the number of $n \leq x$ with $f(n) \in \E$ is indeed $o(x)$ as $x\to\infty$.

One could also consider the function $f(n) = \ell(p_{n+1})$ whose values are the order of $2$ modulo the odd primes. Theorem \ref{thm:genCE} applies to this $f$ as well. The proof is similar to that just given, but slightly simpler, and we leave the details to the reader. In all of these statements, the obvious analogues hold with $2$ replaced by any fixed integer $a \not\in \{0, \pm 1\}$.

\subsection{Can we sum divisors properly?} It is natural to wonder if Theorem \ref{thm:genCE} applies to the sum-of-\emph{proper}-divisors function $s(n)$, defined by $s(1)=1$ and $s(n)=\sigma(n)-n$ for $n > 1$. The growth conditions are not difficult to check, but we do not know how to show that meager sets have density zero preimages. This seems to merit further study. We note that the stronger conjecture that any density zero set has a density zero preimage has been proposed by Erd\H{o}s, Granville, Pomerance, and Spiro \cite[Conjecture 4]{EGPS90}.

\section*{Acknowledgments}
The authors thank Greg Martin and Carl Pomerance for valuable suggestions.

\end{document}